\newcommand{\Z}{{\mathbb Z}}
\newcommand{\C}{{\mathbb C}}
\newcommand{\R}{{\mathbb R}}
\newcommand{\QQ}{{\mathcal Q}}
\newcommand{\UU}{{\mathcal U}}
\newcommand{\oooo}{\overline}
\newcommand{\nnn}{\nabla}
\DeclareMathOperator{\id}{id}
\DeclareMathOperator{\rank}{rank}
\DeclareMathOperator{\Id}{Id}
\DeclareMathOperator{\diag}{diag}
\let\tilde\widetilde
\let\hat\widehat
\newcommand{\MR}[1]{}
\begin{document}

\title{Integrable Harmonic Higgs Bundles With Vanishing $\mathcal{U}$ And Eigenvalues of $\mathcal{Q}$}
\author{Jiezhu Lin, Xuanming Ye}
\institute{Jiezhu Lin \at
            Department of Mathematics and Information Science ,Guangzhou University,Guangzhou, P. R. China \\
              \email{ljzsailing@163.com}\\
          Xuanming Ye \at
            Department of Mathematics and Information Science ,Guangzhou University,Guangzhou, P. R. China \\
              \email{youwentizhaolaoye@163.com}
}

\maketitle
\begin{abstract}
We study the tt*-geometry with vanishing endormorphism $\UU$. Given an integrable harmonic Higgs bundle $(E, h, \Phi, \mathcal{U},\mathcal{Q})$ on a complex manifold $M$, Firstly  we prove that, under the \emph{IS} condition, vanishing $\UU$ implies vanishing Higgs field $\Phi$ and the Chern connection of the Hermitian Einstein metric $h$ is a holomorphic connection, so the metric $h$ and $\mathcal{Q}$ are invariant. Secondly, without the \emph{IS} condition, we show that vanishing $\UU$ will imply vanishing Higgs field $\Phi$ if we assume that the Chern connection of $h$ is a holomorphic connection. Finally, we add real structure $\kappa$. Given any \emph{CV}-structure, we prove that super-symmetric operator $\mathcal{Q}$ must have $0$ as an eigenvalue when the underlying bundle has odd rank.
\keywords{integrable harmonic Higgs bundle, tt*-structure, \emph{CV}-structure,\emph{CDV}-structure}
\end{abstract}
\section{Introduction}\label{intro}

Cecotti and Vafa \cite{CV3} \cite{CVN} considered moduli spaces of
$N=2$ super-symmetric quantum field theories and introduced a geometry on them which is governed by the tt*-equations.
Tt*-structure is understood well after the work of C. Hertling in \cite{Hert2}, as an enrichment that of harmonic bundle previously introduced by N. Hitchin and C. Simpson. Tt*-structure was axiomatized as a \emph{CV}-structures by C. Hertling in \cite{Hert2}.
One way to study tt*-structure is to construct the so-called \emph{TERP}-structure and prove it to be tr\emph{TERP}-structure. The exsitence of a tt*-structure on the base space of semiuniversal unifolding of hypersurface singularity was proved by C. Hertling, by using oscillating integrals, and he proved that this structure was compatible with the Frobenius structure and get a \emph{CDV}-structure. The existence of a \emph{CDV}-structure on the base space of a convenient and non-degenerate Laurent polynomial was proved by C. Sabbah in \cite{Sabb22}.
Another way to build tt*-structure on deformation space of Landau-Ginzburg model was developed by H. Fan in \cite{Fan}. By considering the spectrum theory of twisted Lapacian operator, he obtained tt*-structure on the deformation space.
Most recent work on $\Hat{\Gamma}$-integral structure in orbifold quantum cohomology has been done by A. Chiodo, H. Iritani and Y. Ruan. in \cite{CIR}.

An integrable harmonic Higgs bundle $(E, h,\Phi, \mathcal{U},\mathcal{Q})$ is a harmonic bundle $(E,h,\Phi)$ on a complex manifold $M$ with supplementary structures $\mathcal{U}$ and $\mathcal{Q}$, here $\mathcal{U}$ and $\mathcal{Q}$ are endomorphisms of complex vector bundle $H$ associated to $E$. Adding a real structure $\kappa$ on a integrable harmonic Higgs bundle $(E, h,\Phi, \mathcal{U},\mathcal{Q})$ in a compatible way we can get a \emph{CV}-structure.
When the \emph{CV}-structure is semi-simple everywhere, the associated \emph{TERP}(w)-structure is determined  completely by the number $w$, its Stokes matrix $S$ and eigenvalues of $\mathcal{U}|_{t_0}$. However, the moduli spaces of massive deformations of conformal field theories contain non semi-simple points. The points $t_0$ with $\mathcal{U}|_{t_0}=0$ correspond to conformal field theories. The eigenvalues of $\mathcal{Q}|_{t_0}$ at such points are charges, certain rational number $\alpha_j$ such that $(-1)^we^{-2\pi i \alpha_j}$ are the eigenvalues of the monodromy. In the singularity case they are up to a shift the spectral numbers(=the exponents-1) \cite{Hert2}.

The purpose of this article is to study the integrable harmonic Higgs bundles $(E, h, \Phi, \mathcal{U},\mathcal{Q})$ with vanishing $\mathcal{U}$, and to study on eigenvalues of $\mathcal{Q}$ for any \emph{CV}-structure. We firstly conclude that, given an integrable harmonic Higgs bundle, vanishing $\mathcal{U}$ will implies vanishing of the Higgs field $\Phi$  if $\mathcal{Q}$ saitisfies the \emph{IS condition} at $p$, here "\emph{IS condition}" means that differences of any two eigenvalues of $\mathcal{Q}|_{p}$ are not $\pm 1$. Moreover, the $(1,0)$ part of  Chern connection $D'$ of the Hermitian Einstein $h$  is a holomorphic connection. The structure connection $\tilde{\nabla}$ have regular singularity at $0$,  the pullback bundle $p^*E$ is the logarithmic lattice,  and $\tilde{\nabla}$ can be decomposition as a direct sum of $r=\rank E$  meromorphic connection with regular singularities at $0$, here $p:\mathbb{C}^{*}\times M \rightarrow M$ is the projection.  For a \emph{CDV}-structure, since $\mathcal{U}=-\Phi_{\mathcal{E}}$, Obviously $\Phi=0$ implies $\mathcal{U}=0$. However the inverse is usually not true for a \emph{CV}-structure. It is quite interesting that the inverse is true when the $\mathcal{Q}$ satisfies the \emph{IS} condition at one point.

Secondly, we consider the case that some difference of two eigenvalues of $\mathcal{Q}$ may be equal to $1$. In this case we conclude that an integrable harmonic Higgs bundle $(\mathcal{T}_M, h, \Phi, \mathcal{U},\mathcal{Q})$, $\mathcal{U}=0$ will implies that $\Phi=0$ with the assumption that the $(1,0)$ part of  Chern connection $D'$ is equal to zero.

Finally, given any \emph{CV}-structure, we prove that super-symmetric operator $\mathcal{Q}$ must have $0$ as an eigenvalue when the underlying bundle $E$ has odd rank. we give the results on eigenvalues of $\mathcal{Q}$ for a \emph{CV}-structure $(\mathcal{T}_M, h, \Phi, \kappa,\mathcal{U},\mathcal{Q})$ when $\dim M$ is equal to $2$ and $3$.
\begin{acknowledgements}
We would like to thanks Claude Sabbah for valuable comments, and for pointing out a mistake in Lemma \ref{Lemma2} in the first version of the paper.
\end{acknowledgements}

\section{Frobenius manifold and tt* geometry}\label{section1}
In this section we recall the notion of a Frobenius manifold, integrable harmonic Higgs bundle and tt*-bundle.
 This will mainly serve to fix notation.
\subsection{Saito structure and Frobenius manifold structure}\label{subsection1a}
Frobenius manifolds were introduced and investigated by B.
Dubrovin as the axiomatization of a part of the rich mathematical
structure of the Topological Field Theory (TFT): cf. \cite{D},
\cite{Hert}, \cite{Mani}.

A Frobenius manifold (also called Frobenius structure on $M$) is a
quintuple $(M, \circ, g, e, \mathcal{E})$. Here $M$ is a manifold
in one of the standard categories ($C^\infty$, analytic, ...), $g$
is a metric on $M$ (that is, a symmetric, non-degenerate bilinear
form, also denoted by $\< \,,\, \>$), $\circ$ is a commutative and
associative product on the tangent bundle $\mathcal{T}_M$ and
depends smoothly on $M$, such that if $\nabla$ denotes the
Levi-Civita connection of $g$ and $\Theta_M$ denotes the locally
free sheaf of $\mathcal{O}_M$-module corresponding to
$\mathcal{T}_M$, all subject to the following conditions:
\begin{enumerate}
\item[a)] $\nabla$ is flat;

\item[b)] $g( X \circ Y, Z) = g( X, Y \circ Z)$, for any $X, Y, Z
\in \Theta_M$.

\item[c)] the unit vector field e is covariant constant w.r.t.
$\nabla$
\begin{eqnarray*}
\nabla e = 0;
\end{eqnarray*}

\item[d)] Let
\begin{equation*}
c(X, Y, Z):=g(X \circ Y, Z)
\end{equation*}
(a symmetric 3-tensor). We require the 4-tensor
\begin{equation*}
(\nabla_Z c)(U, V, W)
\end{equation*}
to be symmetric in the four vector fields $U, V, W, Z$.

\item[e)] A vector field $\mathcal{E}$ must be determined on $M$
such that
\begin{align*}
\nabla (\nabla \mathcal{E}) &= 0;\\
\mathcal{L}_{\mathcal{E}}(\circ)&=\circ;\\
\exists d \in \mathbb{C}, \quad \mathcal{L}_{\mathcal{E}} ( g)&= (2-d) \cdot g.
\end{align*}
\end{enumerate}

Locally, given a Frobenius manifold structure on an open subset $U
\subset \mathbb{C}^{m}$, Let $t=(t^1, t^2,\dots, t^m)$ be
holomorphic local coordinates of $U$ such that $e=\partial_{t^1}$,
then we can find a function $F=F(t)$ such that its third
derivatives
$$C_{ijk}:=\frac{\partial F}{\partial t^i \partial t^j \partial
t^k}$$ satisfy the following equations
\begin{enumerate}
\item[1)] Normalization:
$$g_{ij}:=C_{1ij}$$ is a constant non-degenerate
matrix. Let
$$(g^{ij}):=(g_{ij})^{-1}$$

\item[2)] Associativity: the functions
$${C_{ij}}^k:= \sum_{l} C_{ijl}
\cdot g^{lk}$$ define a commutative and associative algebra on
$T_t M$ by
$$\partial_{t^i} \circ \partial_{t^j}:= \sum_k C_{ij}^k
\partial_{t^k}$$

\item[3)] Homogeneity: The function $F$ must be quasi-homogeneous,
i.e.,
\[
\mathcal{L}_{\mathcal{E}} F = d_F \cdot F + \text{quadratic
terms},
\]
where $\mathcal{E}=\sum_{i,j} (q_i^j t^i + r^j )\partial_{t^j}$,
and $d_F \in \mathbb{C}$.
\end{enumerate}

If the endomorphism $\nabla \mathcal{E}$ is semi-simple, then the
Euler vector field can be reduced to the form
$$\mathcal{E} = \sum_i d_i t^i \partial_{t^i} + \sum_{j \mid d_j =0} r_j \partial_{t^j}.$$
where all $r_j$ are complex numbers, and all $d_i$ are the
eigenvalues of $\nabla \mathcal{E}$. Moreover, if $g(e, e)=0$, we
have
\begin{proposition}[\cite{D}]\label{proposition4}
Let $M$ be a Frobenius manifold. Assume that $g(e, e)=0$ and that
the endomorphism $\nabla \mathcal{E}$ is semi-simple. Then by a
linear change of coordinates t$_i$ the matrix $g_{ij}$ can be
reduced to the anti-diagonal form
\begin{align*}
g_{ij}=\delta_{i+j,m+1};\\
e=\partial_{t^1}
\end{align*}
and in these coordinates, write
\begin{align*}
F(t)= \frac{1}{2} (t^1)^2 t^m + \frac{1}{2} t^1 \sum_{i\geq 2} t^i
t^{m-i+1} + f(t^2, t^3,\dots, t^m)
\end{align*}
for some functions, the sum
$$d_i + d_{m-i+1}$$ does not depend on $i$, and
$$d_F = 2d_1 + d_m.$$
If the degrees are normalized in such a way that $d_1 =1$ then
they can be represented in the form
$$d_i = 1-q_i;\quad d_F=3-d,$$
where $q_1, q_2,\dots, q_m,d$ satisfy
$$q_1 = 0,\quad q_m = d,\quad q_i
+q_{m-i+1} =d.$$
\end{proposition}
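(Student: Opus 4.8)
The plan is to produce, by a single constant (hence flatness‑preserving) linear change of the flat coordinates, a frame in which simultaneously $e=\partial_1$, the metric is anti‑diagonal, and $\nabla\mathcal{E}$ is diagonal, and then to read off the shape of $F$ from the unit axiom and the degree relations from the homogeneity of $\mathcal{E}$. First I would fix flat coordinates: since $\nabla$ is flat (axiom a) there are local coordinates with $\nabla_{\partial_i}\partial_j=0$, and in them $g_{ij}$ is a constant symmetric nondegenerate matrix (because $\nabla g=0$) while $e$ is a constant field (because $\nabla e=0$). The potentiality axiom (d) says that in flat coordinates $\partial_l c_{ijk}$ is totally symmetric, where $c_{ijk}=g(\partial_i\circ\partial_j,\partial_k)$; an iterated application of the Poincar\'e lemma then yields a potential $F$ with $c_{ijk}=\partial_i\partial_j\partial_k F$. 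The unit axiom $\partial_1\circ\partial_i=\partial_i$ gives the normalization $c_{1ij}=g_{ij}$.

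The heart of the argument is a linear‑algebra normalization. Set $A:=\nabla\mathcal{E}$, a constant matrix since $\nabla\nabla\mathcal{E}=0$. On the constant metric, $\mathcal{L}_\mathcal{E}g=(2-d)g$ becomes $g(AX,Y)+g(X,AY)=(2-d)g(X,Y)$, so $A-\tfrac{2-d}{2}\Id$ is $g$‑skew; hence the eigenvalues of $A$ are symmetric about $\tfrac{2-d}{2}$, and eigenspaces for $\lambda,\mu$ are $g$‑orthogonal unless $\lambda+\mu=2-d$. From $\mathcal{L}_\mathcal{E}\circ=\circ$ I would first derive $\mathcal{L}_\mathcal{E}e=-e$ (applying $\mathcal{L}_\mathcal{E}$ to $X\circ e=X$ and using $(\mathcal{L}_\mathcal{E}\circ)(X,e)=X\circ e$), whence $A(e)=e$, so $e$ is an eigenvector with eigenvalue $1$ and its $g$‑partner lives in the eigenspace for $1-d$. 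Using semisimplicity of $A$ I diagonalize it; using that $g$ pairs the eigenspace of $\lambda$ nondegenerately with that of $2-d-\lambda$, I choose dual bases, and here $g(e,e)=0$ is exactly what permits taking $e$ as the first vector of an isotropic pair. Ordering the eigenvalues so that $d_i+d_{m+1-i}=2-d$ produces a constant eigenbasis $v_1=e,\dots,v_m$ with $g(v_i,v_j)=\delta_{i+j,m+1}$; being constant combinations of the old flat frame, the $v_i$ are flat and commute, hence are the coordinate fields of new flat coordinates realizing $e=\partial_{t^1}$ and the anti‑diagonal metric.

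With $g_{ij}=\delta_{i+j,m+1}$ and $c_{1ij}=g_{ij}$, integrating $\partial_1\partial_i\partial_j F=\delta_{i+j,m+1}$ in $t^1$ (and discarding the quadratic ambiguity in $F$) reproduces the displayed $t^1$‑dependent cubic terms, the remaining $t^1$‑independent piece being $f(t^2,\dots,t^m)$. For the degrees I would use the reduced Euler field $\mathcal{E}=\sum_i d_i t^i\partial_i+\sum_{d_j=0}r_j\partial_j$, for which $[\mathcal{E},\partial_i]=-d_i\partial_i$. Combining this with $c(X,Y,Z)=g(X\circ Y,Z)$, $\mathcal{L}_\mathcal{E}g=(2-d)g$ and $\mathcal{L}_\mathcal{E}\circ=\circ$ gives $\mathcal{L}_\mathcal{E}c=(3-d)c$, i.e. $\mathcal{E}(c_{ijk})=(3-d-d_i-d_j-d_k)c_{ijk}$. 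Evaluating on the constant entries $c_{1ij}=\delta_{i+j,m+1}$ forces $d_1+d_i+d_j=3-d$ whenever $i+j=m+1$, so $d_i+d_{m+1-i}=3-d-d_1$ is independent of $i$; since $d_1=1$ this equals $2-d$. A threefold differentiation of $\mathcal{L}_\mathcal{E}F=d_F F+\text{quadratic}$ kills the quadratic terms and yields $d_F=3-d=2d_1+d_m$ (using $d_m=1-d$). Putting $q_i:=1-d_i$ then gives $q_1=0$, $q_m=d$ and $q_i+q_{m-i+1}=d$.

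I expect the main obstacle to be precisely the simultaneous normalization of the second paragraph: arranging by one constant change of coordinates that $e=\partial_{t^1}$, that $g$ is anti‑diagonal, and that $\nabla\mathcal{E}$ is diagonal all at once. This is where the three hypotheses interlock — semisimplicity supplies the eigenbasis, the skew‑adjointness of $\nabla\mathcal{E}-\tfrac{2-d}{2}\Id$ forces the pairing $g(v_i,v_j)$ to be anti‑diagonal, and $g(e,e)=0$ is what lets $e$ be the first member of an isotropic pair rather than a self‑paired vector that would spoil the anti‑diagonal form while keeping $e=\partial_{t^1}$.
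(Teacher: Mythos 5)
The paper itself gives no proof of this proposition -- it is imported verbatim from the cited reference \cite{D} (Dubrovin), so there is nothing in the text to compare against line by line. Your argument is correct and is essentially the standard one from that source: flat coordinates plus the Poincar\'e lemma for the potential, the observation that $\nabla\mathcal{E}-\tfrac{2-d}{2}\Id$ is $g$-skew so that eigenspaces of $\nabla\mathcal{E}$ pair anti-diagonally (with $g(e,e)=0$ handling the only delicate case, $d=0$, where the eigenvalue-$1$ eigenspace is self-paired), integration of $c_{1ij}=g_{ij}$ for the shape of $F$, and $\mathcal{L}_{\mathcal{E}}c=(3-d)c$ evaluated on the constant entries for the degree relations.
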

So, under the assumption of Proposition \ref{proposition4}, we can
choose flat holomorphic local coordinates $t^1, t^2,\dots, t^m$ of
$M$ such that $g_{ij}=\delta_{i+j,m+1}$, $e=
\partial_{t^1}$ and
\begin{align*}
\mathcal{E}=\sum_{i}d_{i}t^{i}\partial_{t^i}+ \sum_{i |d_i =0}
r^{i} \partial_{t^i};\\
d_1 =1;\\
d_i + d_{m+1-i}=2-d;
\end{align*}
\subsection{Harmonic Higgs bundles with supplementary structures}\label{subsec:supplstruct}
In this paragraph, we consider supplementary structures on a harmonic Higgs bundle. 
Let $M$ be a complex manifold and Let $E$ be holomorphic bundle on $M$, equipped with a Hermitian non-degenerate sequilinear
form $h$. We will say that $\left(E, h\right)$ is a Hermitian holomorphic bundle. For any operator $P$ acting on $E$ , we will
denote by $P^\dag$ its adjoint with respect to $h$. A holomorphic Higgs field $\Phi$ on $E$ we means an $\mathcal{O}_M$-linear morphism
$\Phi:E \longrightarrow \Omega^1_M \otimes E$ satisfying the integrability relation $\Phi \wedge \Phi=0$, we  then say that $\left( E, \Phi\right)$
is a Higgs bundle.

Let $(E,h)$ be a Hermitian holomorphic bundle with Higgs field $\Phi$. Let $H$ be the associated $C^\infty$ bundle, so that $E=Ker \overline \partial$, let $D=D^{'}+\overline{\partial}$ be the Chern connection of $h$ and let $\Phi^\dag$ be the $h$-adjoint of $\Phi$. We say that $(E,h,\Phi)$ is a harmonic Higgs bundle $($or that $h$ is \emph{Hermite-Einstein} with respect to $(E,\Phi)$ $)$ if $D^{'}+\overline{\partial}+\Phi+\Phi^\dag$ is an integrable connection on~$H$. This is equivalent to a set of relations:
\begin{align}
(\overline{\partial})^2=0,\overline{\partial}(\Phi)=0, \Phi \wedge \Phi=0; \\
(D')^2=0, D^{'}(\Phi^\dag)=0,\Phi^\dag \wedge \Phi^\dag=0; \\
 D'(\Phi)=0, \overline{\partial}(\Phi^\dag)=0, D'\overline{\partial}+\overline{\partial}D'=-(\Phi \Phi^\dag+\Phi^\dag \Phi).
\end{align}
where the first line is by definition, the second one by $h$-adjunction from the first one, and the third line contains the remaining relations in the integrability condition of $D^{'}+\overline{\partial}+\Phi+\Phi^\dag$.
\begin{definition}[\cite{Sabbah2}]\label{realHB}
Let $(E, h)$ be a Hermitian holomorphic bundle, Let $H$ be the $C^\infty$ bundle. By a \emph{real structure} we mean an antilinear isomorphism $\kappa:H \widetilde{\rightarrow} H $ such that
\begin{align}
\kappa^2=\Id; \\
h(\kappa \cdot, \kappa \cdot)=\overline{h(\cdot, \cdot)}; \\
D(\kappa)=0.
\end{align}
\end{definition}

\begin{remark}
Set
$g(X,Y)=h(X,\kappa Y),$ we get a  nondegenerate bilinear form $g$. Obviously $g$ is symmetric and compatible with $D$,i.e., $D(g)=0.$
\end{remark}

\begin{definition}[\cite{Sabbah2}]\label{potentialHHB}
Let $(E, h, \Phi)$ be a harmonic Higgs bundle, if there exists a $C^\infty$ endomorphism $A$ of $E$ satisfying $\Phi=D^{'}A.$ we will say that $(E, h, \Phi)$ is a \emph{potential harmonic Higgs bundle}, also denoted by $(E, h, A)$
\end{definition}

\begin{definition}[\cite{Sabbah2}]\label{integrableHHB}
An \emph{integrable harmonic Higgs bundle} is a tuple $(E, h, \Phi, \mathcal{U},\mathcal{Q} )$,  here $(E, h,\Phi)$ is a harmonic Higgs bundle, and there exist two endomorphisms $\mathcal{U}$ and  $\mathcal{Q}$ of $H$ satisfying
\begin{align}
\overline{\partial}(\mathcal{U})=0; \label{U}\\
\mathcal{Q}^\dag= \mathcal{Q}. \label{QQ} \\
[\Phi, \mathcal{U}]=0; \label{CU}\\
D'(\mathcal{U})- [\Phi, \mathcal{Q}]+\Phi=0; \label{UCQ}\\
D'(\mathcal{Q})+ [\Phi,\mathcal{U}^\dag]=0; \label{QCU}
\end{align}
\end{definition}

\begin{remark}
Given any harmonic harmonic Higgs bundle, if we set
$$\tilde{\nabla}=D^{'}+\overline{\partial}+\frac{1}{z}\Phi+ z \Phi^{\dag} + (\frac{\mathcal{U}}{z}-\mathcal{Q}-z \mathcal{U}^\dag)\frac{dz}{z}$$
It is an integrable connection on the pull-back bundle $\pi: p^*E \rightarrow \mathbb{C}^{*}\times M$. The $(0,1)$-part of the connection $\overline{\partial}+z \Phi^{\dag}$ gives a holomorphic structure on pullback bundle. and $(1,0)$-part of this connection
$D^{'}+\frac{1}{z}\Phi+ (\frac{\mathcal{U}}{z}-\mathcal{Q}-z \mathcal{U}^\dag)\frac{dz}{z}$ is called the \emph{structure connection} of the integrable  harmonic Higgs bundle.
\end{remark}
Putting all the structure together, we get
\begin{definition}[\cite{Sabb22}]\label{tt*}
 A \emph{tt*-bundle} is a tuple $(E, h,\Phi,\kappa, \mathcal{U},\mathcal{Q})$, such that $(E, h,\kappa)$ be a real Hermitian holomorphic bundle,
$(E, h,\Phi, \mathcal{U},\mathcal{Q})$ is an integrable harmonic Higgs bundle, and moreover,
 \begin{align*}
\mathcal{U}^*=\mathcal{U},\\
\mathcal{Q}^*+\mathcal{Q}=0.
\end{align*}
\end{definition}

\begin{remark}\label{CVHert}
A tt*-bundle is a \emph{CV}-structure, and an integrable harmonic Higgs bundle is \emph{CV}-structure without real strucure $\kappa$ in \cite{Hert2}.
\end{remark}

\begin{definition}[\cite{Sabbah2}]\label{hFM}
A structure of \emph{harmonic Frobenius manifold} $(M, \circ, g,\kappa,e,\mathcal{E})$ on a complex manifold such that
$(M, \circ, g,e,\mathcal{E})$ is a Frobenius manifold, and $(M,h,\kappa, \Phi,\mathcal{U},\mathcal{Q})$ is a real integrable harmonic Higgs bundle,
with supplementary condition $D_e e=0$ and $d\in \mathbb{R}.$
Here
$\Phi_X Y:=-X\circ Y ,h(X,Y):=g(X, \kappa Y),\mathcal{U}:=-\Phi_{\mathcal{E}} ,\mathcal{Q}:=D^{'}_{\mathcal{E}}-\mathcal{L}_{\mathcal{E}}-\frac{2-d}{2} \Id.$
\end{definition}
\begin{remark}[\cite{Sabbah2}]\label{CDV}
A structure of harmonic Frobenius manifold is a manifold with a \emph{CDV}-structure in \cite{Hert2}.
\end{remark}

\begin{proposition}[\cite{Sabbah2}]\label{1}
There is a canonical harmonic structure on the canonical Frobenius manifold attached to a convenient and nondegenerate Laurent polynomial. The corresponding Hermitian metric $h$ is positive definite.
\end{proposition}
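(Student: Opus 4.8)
The plan is to deduce the statement from the construction and the \emph{positivity} of an associated twistor-type (\emph{TERP}) structure, in the spirit of the oscillating-integral method mentioned in the introduction. The canonical Frobenius manifold attached to a convenient and nondegenerate Laurent polynomial $f\colon(\C^*)^n\to\C$ is the base $M$ of a universal unfolding of $f$, on which $E=\mathcal{T}_M$ carries the product $\Phi_X Y=-X\circ Y$ and the Euler field $\mathcal{E}$; its Frobenius structure is produced from a primitive form on the Brieskorn lattice. To promote this to a harmonic structure in the sense of Definition~\ref{hFM} (a \emph{CDV}-structure, Remark~\ref{CDV}), it suffices, by Hertling's correspondence between tr\emph{TERP}-structures and \emph{CDV}-structures \cite{Hert2}, to exhibit a real \emph{TERP}-structure on $M$ and to verify that it is a tr\emph{TERP}-structure; the positivity of the polarization is then precisely the positive definiteness of $h$.

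First I would build the \emph{TERP}-structure from the Gauss--Manin system of $f$. Concretely, the Brieskorn lattice
\[
G_0=\Omega^n[z]/(z\,\ddd+\ddd f\wedge)\,\Omega^{n-1}[z]
\]
is a free module carrying a meromorphic connection with a pole of order two at $z=0$ and a regular singularity at $z=\iiii$, whose flat sections are represented by the oscillating integrals $\int_\Gamma e^{f/z}\,\omega$. Multiplication by $f$ yields $\mathcal{U}=-\Phi_{\mathcal{E}}$, while the residue at $z=\iiii$ (the graded pieces of the $V$-filtration) furnishes the self-adjoint endomorphism $\mathcal{Q}$ of Definition~\ref{integrableHHB}, whose eigenvalues are the spectral numbers. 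Saito's higher residue pairing gives the flat pairing, and the real structure $\kappa$ comes from the real oscillating cycles. This assembles a \emph{TERP}$(n)$-structure together with the data $(\mathcal{U},\mathcal{Q})$.

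The key step, and the one I expect to be the main obstacle, is positivity. Since $f$ is convenient and nondegenerate it is \emph{cohomologically tame}, so the flat bundle underlying its Gauss--Manin system is a polarized pure twistor structure; the crucial and delicate point is that this purity \emph{and the sign of the polarization} survive the Fourier--Laplace transform, so that the \emph{TERP}-structure above is genuinely a tr\emph{TERP}-structure. This rests on Sabbah's theorem that the Fourier--Laplace transform of the variation of polarized Hodge/twistor structure attached to a tame function is again polarized, and is exactly where the analytic content (the theory of polarizable pure twistor $\mathcal{D}$-modules) enters; it is the heart of the argument in \cite{Sabb22}.

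Finally, transporting the positive polarization back through the correspondence yields a harmonic Frobenius structure $(M,\circ,g,\kappa,e,\mathcal{E})$ compatible with the given Frobenius manifold, with $h(X,Y)=g(X,\kappa Y)$ positive definite. Because the whole construction uses only the canonical primitive form and the canonical real (Betti) structure, the resulting harmonic structure is canonical, which is the assertion.
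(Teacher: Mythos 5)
The paper does not prove this proposition at all: it is quoted verbatim from \cite{Sabbah2} as background (the surrounding text explicitly attributes the existence of the canonical harmonic/\emph{CDV}-structure on the unfolding of a convenient nondegenerate Laurent polynomial to Sabbah), so there is no in-paper argument to compare yours against. That said, your outline is a faithful summary of how the cited result is actually established: construct the \emph{TERP}-structure from the Brieskorn lattice of the Gauss--Manin system (with $\mathcal{U}=-\Phi_{\mathcal{E}}$ from multiplication by $f$, $\mathcal{Q}$ from the residue data, the pairing from Saito's higher residue pairing, and $\kappa$ from the Betti/oscillating-cycle real structure), and then obtain positive definiteness of $h$ from the fact that cohomological tameness makes the Fourier--Laplace transform a \emph{polarized} pure (integrable) twistor structure, i.e.\ the \emph{TERP}-structure is a tr\emph{TERP}-structure in Hertling's sense. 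Be aware, however, that as written your text is a roadmap rather than a proof: the entire mathematical content of the proposition is concentrated in the step you yourself flag as ``the main obstacle,'' namely that purity and the sign of the polarization survive the Fourier--Laplace transform, and you invoke this as a black box. There is also a small imprecision in your description of the Brieskorn lattice for a Laurent polynomial: the forms must be taken on the torus $(\C^*)^n$ (logarithmic/torus-invariant volume forms), not the affine-space de Rham complex, and the convenience and nondegeneracy hypotheses are exactly what guarantee freeness of $G_0$ and tameness; these hypotheses should appear where you assert freeness, not only in the positivity step.
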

The existence of  tt*-structure of rank two was completely discussed in \cite{taka}.
The existence of a canonical harmonic structure(\emph{CDV}-structre) on base space of a semi-universal unfolding of a hypersurface singularity was prove in \cite{Hert2}. The existence of a canonical harmonic structure on base space of a universal unfolding of a convenient and non-degenerate Laurent polynomails was proved in \cite{Sabb22} .  A suffucient and necessary condition for a Frobenius manifold to be a harmonic Frobenius manifold was given by the first author in \cite{Lin}, and she construct a real structure $\kappa$ on a Frobenius manifold to be harmonic Frobenius manifold with vanishing $\mathcal{Q}$. The integral structure called $\hat{\Gamma}$-integral structure on quantum D-modules was done in \cite{CIR}. More recent work on tt*-structure on Landau-Ginzburg side has been done in \cite{FLY}.
\subsection{Correspondence with special integrable harmonic Higgs bundles}\label{s3.2}
Harmonic bundles was introduced by Simpson to a generalization of variations of polarized Hodge structure.
But from harmonic bundle one can not recover the Hodge filtration, integrable harmonic Higgs bundle provides such information.
\begin{example}[\cite{Sabbah2}]\label{Sabb1}
(Variations of complex Hodge structures of weight 0)
Let $H$ be a $C^\infty$ vector bundle on $M$, equipped with a flat connection $\tilde{\nabla}=\tilde{\nabla}^{'}+\tilde{\nabla}^{''}$ and a composition $H=\oplus_{p\in \mathbb{Z}}H^p$ into $C^\infty$ subbundles. We assume that Griffiths transversality relations hold:
$$\tilde{\nabla}^{'} H^p \subset ( H^p \oplus H^{p-1})\otimes_{\mathcal{O}_M} \Omega_M^1,\tilde{\nabla}^{''} H^p \subset ( H^p \oplus H^{p+1})\otimes_{\mathcal{O}_{\overline M}} \Omega_{\overline{M}}^1$$
We denoted by $D_{|H^p}$ the composition of $\tilde{\nabla}_{|H^p}$ with the projection to $H^p,$ denoted by $\Phi_{|H^p}$ the composition of $\tilde{\nabla}^{'}_{|H^p}$ with the projection to $H^{p-1},$
and by $\Phi^\dag_{|H^p}$ that of $\tilde{\nabla}^{''}_{|H^p}$ with the projection to $H^{p+1},$ then we set
 $$D=\oplus_p D_{|H^p}, \Phi= \oplus_p \Phi_{|H^p},\Phi^\dag= \oplus_p \Phi^\dag_{|H^p}$$

Assume that we are given a non-degenerate Hermitian form $k$ such that $\tilde{\nabla}(k)=0$ and the decomposition $H=\oplus_{p\in \mathbb{Z}}H^p$ is $k$-orthogonal. Consider the nondegenerate Hermitian form $h=\oplus_p (-1)^p k_{|H^p}$. Then $D(h)=0$ and $\Phi^\dag$ is complex Hodge structure of weight $0$. In particular, $(H, D^{''}, h, \Phi)$ is a harmonic Higgs bundle. Set $\mathcal{Q}=\oplus_p p \Id_{|H^p}$ and $\mathcal{U}=0.$ we get $D(\mathcal{Q})=0$ and as $p$ is real, we have $\mathcal{Q}^\dag=\mathcal{Q}.$ Lastly, we have $[\Phi, \mathcal{Q}]=\Phi.$
By a real structure $\kappa$, we mean an anti-linear involution $\kappa: H \rightarrow H$ which is $\tilde{\nabla }$-horizontal such that  $\kappa (H^p)= H^{-p}$ for any $p$. Then $D(\kappa)=0$ and $\Phi^\dag=\kappa \Phi \kappa.$ The previous data thus define a tt*-bundle.
\end{example}

The inverse of example \ref{Sabb1} is straightforward. We formulate it.
\begin{lemma}[\cite{Hert2}]\label{t3.4}
Let $(H\to M,D,\Phi,\kappa,h,\UU,\QQ)$ be a tt*-bundle with
$\UU=0$ and such that $\QQ$ has no eigenvalues in
$\frac{w+1}{2}+\Z$.

Define a connection $\nnn:=D+\Phi+\Phi^\dag$ and define
\begin{eqnarray*} \label{3.29}
H^{p,w-p}_t&:=&\bigoplus_{\alpha:\ [\alpha+\frac{w+1}{2}]=p}
\ker(\QQ-\alpha\id:H_t\to H_t)\ ,\\
F^p_t&:=&\bigoplus_{q\geq p}H^{q,w-q}_t\ ,\\
S&:&H_t\times H_t \to \C \mbox{ \ \ with}\\
S(a,b)&:=& (2\pi i)^w(-1)^ph(a,\oooo b) \mbox{ \ \ for }
a\in H^{p,w-p}_t,\ b\in H_t\ ,\nonumber\\
A &:=& e^{2\pi i\QQ}\ .
\end{eqnarray*}
Then $(H\to M,\nnn,H_\R,S,F^\bullet,A)$ is a variation of polarized
Hodge structures of weight $w$ with an automorphism $A$.
\end{lemma}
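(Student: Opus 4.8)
The plan is to reverse the construction of Example \ref{Sabb1}: read off a Hodge decomposition from the eigenspaces of $\QQ$, and then check one by one that the flat connection $\nnn$, the real structure $\kappa$, the filtration $F^\bullet$ and the form $S$ satisfy all the axioms of a polarized variation, with $A$ as a flat real automorphism.

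First I would extract the purely algebraic content of the hypothesis $\UU=0$. Setting $\UU=0$ in \eqref{UCQ} gives $[\Phi,\QQ]=\Phi$, i.e. $[\QQ,\Phi]=-\Phi$; taking $h$-adjoints and using \eqref{QQ} yields $[\QQ,\Phi^\dag]=+\Phi^\dag$. Setting $\UU=0$ in \eqref{QCU} gives $D'(\QQ)=0$, and the adjunction $(D'\QQ)^\dag=\overline{\partial}(\QQ^\dag)=\overline{\partial}\QQ$ then forces $\overline{\partial}\QQ=0$, so $D(\QQ)=0$: the operator $\QQ$ is parallel for the Chern connection and its eigenbundles are $D$-flat. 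Since $\QQ=\QQ^\dag$, the eigenvalues $\alpha$ are real. Finally, combining $\QQ^\dag=\QQ$ with $\QQ^*+\QQ=0$ from Definition \ref{tt*} (where $P^*:=\kappa P^\dag\kappa$ is the $g$-adjoint) gives $\kappa\QQ\kappa=-\QQ$, so the antilinear $\kappa$ carries $\ker(\QQ-\alpha)$ onto $\ker(\QQ+\alpha)$; and the compatibility of $\kappa$ with the Higgs field (equivalently, the $g$-self-adjointness of $\Phi$, which makes $\nnn$ real) gives $\kappa\Phi\kappa=\Phi^\dag$. I would verify this last identity carefully against the paper's conventions, since everything downstream rests on it.

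Next comes the Hodge decomposition. Writing $x_\alpha:=\alpha+\tfrac{w+1}{2}$, the hypothesis that $\QQ$ has no eigenvalue in $\tfrac{w+1}{2}+\Z$ says exactly $x_\alpha\notin\Z$, so the index $p(\alpha)=[x_\alpha]$ is unambiguous and the grouping into $H^{p,w-p}_t$ is clean. I would then prove the Hodge symmetry $\kappa H^{p,w-p}_t=H^{w-p,p}_t$: since $\kappa(\ker(\QQ-\alpha))=\ker(\QQ+\alpha)$ and $-\alpha+\tfrac{w+1}{2}=(w+1)-x_\alpha$, the floor identity $[(w+1)-x_\alpha]=w-[x_\alpha]$, valid precisely because $x_\alpha\notin\Z$, gives $w-p$. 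This is where the integrality hypothesis is indispensable: if some $x_\alpha\in\Z$ the floor would read $w+1-p$ and the data would fail to be a weight-$w$ Hodge structure.

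With this in hand the variation axioms follow by tracking the grading. The connection $\nnn=D+\Phi+\Phi^\dag$ is flat, being the integrable connection of the harmonic bundle, and $H_\R=\ker(\kappa-\id)$ is $\nnn$-flat because $D(\kappa)=0$ and $\kappa\Phi\kappa=\Phi^\dag$. The key observation is that $D$ preserves each $H^{p,w-p}_t$ (as $D\QQ=0$), while $[\QQ,\Phi]=-\Phi$ and $[\QQ,\Phi^\dag]=+\Phi^\dag$ show that $\Phi$ lowers the index $p$ by one and $\Phi^\dag$ raises it by one; hence $\nnn H^{p,w-p}\subset(H^{p-1,\bullet}\oplus H^{p,\bullet}\oplus H^{p+1,\bullet})\otimes\Omega^1_M$. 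This gives Griffiths transversality $\nnn F^p\subset F^{p-1}\otimes\Omega^1_M$ at once, and since $\nnn^{0,1}=\overline{\partial}+\Phi^\dag$ (which squares to zero by the harmonic relations $\overline{\partial}\Phi^\dag=0$, $\Phi^\dag\wedge\Phi^\dag=0$) raises $p$ by at most one, each $F^p$ is a holomorphic sub-bundle. For $S$, the relation $\QQ^\dag=\QQ$ together with $\kappa\QQ\kappa=-\QQ$ forces $g(\ker(\QQ-\alpha),\ker(\QQ-\beta))=0$ unless $\alpha+\beta=0$, so $S$ pairs $H^{p,w-p}$ only with $H^{w-p,p}$. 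This single orthogonality yields the first Hodge--Riemann relation $S(F^p,F^{w-p+1})=0$, the $(-1)^w$-symmetry (the factors $(-1)^p$ and $(-1)^{w-p}$ differ by $(-1)^w$), and the $\nnn$-flatness of $S$ (the $\Phi,\Phi^\dag$ terms drop out, leaving $D(g)=0$). Positivity is the computation $i^{p-(w-p)}S(v,\kappa v)=(2\pi)^w h(v,v)>0$ for $0\neq v\in H^{p,w-p}$, using that $h$ is positive definite, while the same normalization $(2\pi i)^w$ makes $S$ real on $H_\R$ (conjugating $(2\pi i)^w$ produces $(-1)^w$, cancelled by reindexing $p\mapsto w-p$). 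Lastly, $A=e^{2\pi i\QQ}$ commutes with $D$ (as $D\QQ=0$) and with $\Phi,\Phi^\dag$ (since $\ad_\QQ$ acts as $\mp1$ there, so $e^{2\pi i\ad_\QQ}$ acts trivially), hence is $\nnn$-flat, and $\kappa A\kappa=e^{-2\pi i\kappa\QQ\kappa}=e^{2\pi i\QQ}=A$ shows it is real.

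The conceptual skeleton above is essentially forced; the genuinely delicate part will be the sign and normalization bookkeeping. Concretely, I expect the main obstacle to be pinning down the adjoint conventions so that $\kappa\QQ\kappa=-\QQ$ and $\kappa\Phi\kappa=\Phi^\dag$ hold with the paper's definitions, and then checking that the \emph{single} constant $(2\pi i)^w$ simultaneously renders $S$ real-valued on $H_\R$, $(-1)^w$-symmetric, and Hodge--Riemann positive. The positivity moreover needs $h$ positive definite, so I would first confirm that this is part of (or follows from) the standing hypotheses on the tt*-bundle.
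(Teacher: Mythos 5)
Your proposal is correct and follows exactly the route the paper itself indicates: the lemma is stated as the ``inverse'' of Example~\ref{Sabb1} and is deferred to \cite{Hert2}, and your argument is precisely that reversal (reading the Hodge grading off the $\QQ$-eigenbundles via $[\QQ,\Phi]=-\Phi$, $D(\QQ)=0$, $\kappa\QQ\kappa=-\QQ$, and the floor identity that needs $\alpha+\tfrac{w+1}{2}\notin\Z$), with the sign and positivity checks worked out consistently with the paper's conventions. The two caveats you flag --- that $\kappa\Phi\kappa=\Phi^\dag$ must be among the tt*-axioms (it appears in Example~\ref{Sabb1} though not explicitly in Definition~\ref{tt*}) and that $h$ must be positive definite for the polarization --- are exactly the standing hypotheses the paper assumes, so no gap remains.
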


The eigenvalues of $\mathcal{Q}$ gives the decomposition of the Hodge decomposition. We are interested in the explicit computation on
eigenvalues of the matrix the $\mathcal{Q}$, we shall see these eigenvalues determined the Higgs field locally.

\bigskip

\section{Main Result}\label{}

In this paper, we study the integrable harmonic Higgs bundle $\left( E, h,\Phi, \mathcal{U}, \mathcal{Q} \right)$.  The Hermitian metric $h$ will be always assumed to be positive-definite. The Chern connection of $h$ is denoted by $D'+\overline{\partial}.$
Firstly, we assume that the differences of any two eigenvalues of $\mathcal{Q}$ is neither $1$ nor $-1.$

\begin{definition}\label{IS}
Let $E$ be a complex vector bundle on $M$, and $\mathcal{Q}$ is an endomorphism of $E$, given any point $p\in M$, we say that $\mathcal{Q}$ satisfies the \emph{IS} condition at $p$ if any difference of two eigenvalues of $\mathbb{Q}(p)$ is not $\pm 1$, here $\mathbb{Q}(p)$ is the matrix of $\mathcal{Q}(p)$ under some local frame.
\end{definition}
Under this \emph{IS} condition ,we prove that  $D^{'} \left( \mathcal{U} \right)=0$ imply $\Phi=0$ locally. Under this conditon, we can conclude that the $(1,0)$-part of the Chern connection $D'$ is a holomorphic connection, and we can choose a holomorphic $D'$-flat local frame
$e_1,e_2,\cdots,e_m$ such that the matrix $\mathbb{Q}$ under the local frame $e_1,e_2,\cdots,e_m$ is constant and diagonal.

\begin{theorem}\label{U0}
Let $\left( E, h,\Phi, \mathcal{U}, \mathcal{Q} \right)$ be an integrable harmonic Higgs bundle.  Assume $\exists p\in M,$ such that $\mathcal{Q}$ satisfies the \emph{IS} condition at $p$, i.e., the differences of any two eigenvalues of $\mathcal{Q} ( p) :E_p\rightarrow E_p$ are  not $\pm 1$, then
\begin{enumerate}
\item[(1) ] There exists an open neighborhood $U_p$ of $p$,  such that $\mathcal{Q}$ satisfies \emph{IS} condition at all $q\in U_p$.
\item[(2) ] Locally, $\Phi$ is uniquely determined by $D^{'}\left( \mathcal{U}\right).$
If $D^{'}\left( \mathcal{U}\right)=0$ holds, then $\Phi=0$.  In this case, the connection $D^{'}$ is holomorphic,i.e. $D^{'}\overline{\partial}+\overline{\partial}D^{'}=0.$  If $\mathcal{Q}$ satisfies \emph{IS} condition at all $q\in M$. then
$\left( E, h,\Phi, \mathcal{U}, \mathcal{Q} \right)$ is a potential integrable harmonic Higgs bundle with  a potential $\mathcal{U}$.
\item[(3) ] There exists a flat holomorphic local frame $\left\{ e_1,e_2,...,e_m \right\} \subset \Gamma \left( M,\mathcal{T} _{M} \right) $,
    the matrix of $\mathcal{Q}$ is a diagonal matrix  $\Lambda =\mathrm{diag}\left( \lambda _1,\lambda _2,...,\lambda _m \right) , \lambda _j\in \mathbb{R}.$ Moreover, specially, if $\mathcal{U}=0$ , then
    Locally, the structure connection can be written in a simple way  $$\tilde{\nabla}
   = D^{'}+\frac{-\Lambda +\frac{w}{2}I_m}{z} dz$$ i.e.,$\tilde{\nabla}$ can be written as direct sum of $m$ line bundles with connections ${\tilde{\nabla}}^i=d+\frac{\lambda_i + \frac{w}{2} }{z}$.
\end{enumerate}
\end{theorem}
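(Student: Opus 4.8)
The plan is to handle the three assertions separately; the only genuinely new ingredient is an elementary spectral analysis of $\ad_{\mathcal{Q}}$ applied to the relation \eqref{UCQ}, and the rest is differential-geometric bookkeeping.

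For (1) I would argue that the \emph{IS} condition is open. Since $\mathcal{Q}$ is a $C^\infty$ field of endomorphisms with $\mathcal{Q}^\dag=\mathcal{Q}$, its eigenvalues are real and depend continuously on the base point as the (unordered) roots of the characteristic polynomial, whose coefficients are smooth in $q$. To avoid ordering issues, note that $\prod_{i<j}\big((\lambda_i-\lambda_j)^2-1\big)$ is a symmetric function of the eigenvalues, hence a polynomial in the entries of $\mathcal{Q}(q)$ and therefore a continuous function of $q$; it is nonzero exactly where \emph{IS} holds. It is nonzero at $p$ by hypothesis, hence nonzero on a neighbourhood $U_p$, which gives (1).

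For (2) the key is to read the integrability relation \eqref{UCQ}, namely $D'(\mathcal{U})-[\Phi,\mathcal{Q}]+\Phi=0$, as a pointwise linear equation for $\Phi$. Writing $[\Phi,\mathcal{Q}]=-\ad_{\mathcal{Q}}(\Phi)$ turns it into $(\Id+\ad_{\mathcal{Q}})(\Phi)=-D'(\mathcal{U})$, an identity of $\End(E)$-valued $1$-forms in which $\Id+\ad_{\mathcal{Q}}$ acts only on the endomorphism factor. Because $\mathcal{Q}$ is Hermitian it is diagonalizable with real spectrum $\{\lambda_i\}$, so $\ad_{\mathcal{Q}}$ is diagonalizable with eigenvalues $\lambda_i-\lambda_j$ and $\Id+\ad_{\mathcal{Q}}$ has eigenvalues $1+\lambda_i-\lambda_j$. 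The \emph{IS} hypothesis forbids $\lambda_i-\lambda_j=-1$, so each $1+\lambda_i-\lambda_j\neq0$ and $\Id+\ad_{\mathcal{Q}}$ is fibrewise invertible on $U_p$; hence $\Phi=-(\Id+\ad_{\mathcal{Q}})^{-1}D'(\mathcal{U})$ is uniquely determined by $D'(\mathcal{U})$, and $D'(\mathcal{U})=0$ forces $\Phi=0$. Then $\Phi^\dag=0$ as well, and the third harmonic relation $D'\overline{\partial}+\overline{\partial}D'=-(\Phi\Phi^\dag+\Phi^\dag\Phi)$ yields vanishing mixed curvature, i.e.\ $D'$ is a holomorphic flat connection. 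Finally, if \emph{IS} holds at every point of $M$ then $\Phi\equiv0$, so $\Phi=D'(\mathcal{U})\,(=0)$ exhibits $\mathcal{U}$ as a potential in the sense of Definition \ref{potentialHHB}.

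For (3) I work in the situation $\Phi=0$ just obtained. Reality of the $\lambda_j$ is immediate from $\mathcal{Q}^\dag=\mathcal{Q}$. Relation \eqref{QCU} with $\Phi=0$ gives $D'(\mathcal{Q})=0$, and taking $h$-adjoints, which exchanges $D'$ and $\overline{\partial}$, yields $\overline{\partial}(\mathcal{Q})=(D'\mathcal{Q})^\dag=0$; hence $\mathcal{Q}$ is parallel for the now-flat Chern connection $D=D'+\overline{\partial}$. Since $D$ is flat, local $D$-flat frames exist, and any such frame is automatically holomorphic and $D'$-flat because its $(0,1)$ and $(1,0)$ covariant derivatives both vanish; moreover $Dh=0$ and $D\mathcal{Q}=0$ make both $h$ and $\mathcal{Q}$ constant in it. A single constant change of basis then normalizes $h$ to the identity and diagonalizes the constant Hermitian matrix of $\mathcal{Q}$, producing the desired flat holomorphic frame $\{e_1,\dots,e_m\}$ with $\mathcal{Q}=\Lambda=\diag(\lambda_1,\dots,\lambda_m)$, $\lambda_j\in\R$. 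Specializing to $\mathcal{U}=0$, the $(1,0)$ structure connection $D'+\tfrac1z\Phi+(\tfrac{\mathcal{U}}{z}-\mathcal{Q}-z\mathcal{U}^\dag)\tfrac{dz}{z}$ collapses to $D'-\mathcal{Q}\tfrac{dz}{z}=d-\tfrac{\Lambda}{z}\,dz$ in this frame; applying the weight-$w$ scalar gauge $z^{w/2}I_m$ shifts the residue by $\tfrac{w}{2}I_m$ and gives $\tilde{\nabla}=D'+\tfrac{-\Lambda+\frac{w}{2}I_m}{z}\,dz$, which is diagonal and hence splits as the direct sum of the rank-one connections $\tilde{\nabla}^i$.

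The main obstacle is not a single hard estimate but the assembly of the normalized frame in (3): one must verify that the constant change of basis diagonalizing $\mathcal{Q}$ can be chosen to preserve flatness, holomorphicity and the metric $h$ simultaneously (this works precisely because $h$ and $\mathcal{Q}$ are both constant in a $D$-flat frame), and that the factor $\tfrac{w}{2}$ arises exactly from the weight-$w$ normalization $z^{w/2}$ rather than being inserted by hand. The spectral remark in (2) is the conceptual heart; once it is in place, the passage from $\Phi=0$ to the flat diagonalizing frame and the explicit structure connection is routine.
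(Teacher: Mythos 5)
Your proposal is correct and follows essentially the same route as the paper: part (2) rests on the invertibility of $\Phi\mapsto[\Phi,\mathcal{Q}]-\Phi$ under the \emph{IS} condition, which is exactly the paper's appeal to the Sylvester-equation lemma (Lemma~2.16 of Sabbah) rephrased as the spectral decomposition of $\Id+\ad_{\mathcal{Q}}$, while parts (1) and (3) match the paper's preparatory lemmas on openness of \emph{IS} and on constancy and diagonalizability of $\mathbb{Q}$ in a $D'$-flat unitary frame. Your openness argument via the symmetric polynomial $\prod_{i<j}\bigl((\lambda_i-\lambda_j)^2-1\bigr)$ and your derivation of $D\mathcal{Q}=0$ by taking $h$-adjoints of $D'(\mathcal{Q})=0$ are marginally cleaner than the paper's versions, but the substance is identical.
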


\begin{corollary}\label{U1}
Let $\left( \mathcal{T} _{M}, h,\Phi, \mathcal{U} =0, \mathcal{Q} \right)
$ be a integrable harmonic Higgs bundle on $M$, set $X \circ Y=-\Phi_X Y$ for arbitrary $X, Y\in \Theta_M$.
Assume $\exists p\in M,$ such that $\mathcal{Q}$ satisfies the \emph{IS} condition at $p$,
then $(M, \circ)$ is a locally trivial pre-Frobenius manfold, i.e. $X \circ Y=0,\forall X, Y\in \Gamma(U_p, \mathcal{T}_M)$.
\end{corollary}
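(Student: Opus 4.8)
The plan is to read this statement directly off Theorem~\ref{U0}, observing that the hypothesis $\mathcal{U}=0$ is strictly stronger than the hypothesis $D'(\mathcal{U})=0$ appearing there. First I would note that the vanishing endomorphism satisfies $D'(\mathcal{U})=D'(0)=0$ identically, so the integrable harmonic Higgs bundle $(\mathcal{T}_M,h,\Phi,\mathcal{U}=0,\mathcal{Q})$ fulfils every assumption of Theorem~\ref{U0} at the given point $p$. I would then invoke part~(1) of that theorem to produce an open neighborhood $U_p$ of $p$ on which $\mathcal{Q}$ satisfies the \emph{IS} condition at \emph{every} point. This openness is exactly what upgrades the pointwise hypothesis at $p$ to a neighborhood-wide statement, which is needed because the desired conclusion $X\circ Y=0$ is asserted for all sections over $U_p$.

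Next, applying part~(2) of Theorem~\ref{U0} on $U_p$ with the identity $D'(\mathcal{U})=0$ in hand, I conclude $\Phi=0$ throughout $U_p$. Since the Higgs field $\Phi\colon\mathcal{T}_M\to\Omega^1_M\otimes\mathcal{T}_M$ vanishes, so does its contraction $\Phi_X$ with any vector field $X$; hence for the product $X\circ Y:=-\Phi_X Y$ one obtains
\[
X\circ Y=-\Phi_X Y=0\qquad\text{for all }X,Y\in\Gamma(U_p,\mathcal{T}_M).
\]
Finally I would match this with the notion invoked in the statement: a product that vanishes identically is the trivial commutative and associative multiplication, so $(M,\circ)$ restricted to $U_p$ is a locally trivial pre-Frobenius manifold, as claimed.

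There is essentially no genuine obstacle here, since all the mathematical content lives inside Theorem~\ref{U0} and the corollary only specializes $D'(\mathcal{U})=0$ to the case $\mathcal{U}=0$. The one point deserving a line of care is the bookkeeping of the bundle setting, namely confirming that $\Phi=0$ as an $\mathcal{O}_M$-linear Higgs field indeed forces every contraction $\Phi_X$ to vanish, and that ``locally trivial pre-Frobenius manifold'' is precisely the assertion $X\circ Y\equiv 0$ on $U_p$. Beyond quoting the theorem, no estimate or computation is required.
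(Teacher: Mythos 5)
Your proposal is correct and matches the paper's (implicit) argument: the paper states Corollary~\ref{U1} without a separate proof precisely because it is the immediate specialization of Theorem~\ref{U0}, using $\mathcal{U}=0\Rightarrow D'(\mathcal{U})=0$, part~(1) for the neighborhood $U_p$ on which the \emph{IS} condition persists, and part~(2) for $\Phi=0$ there, whence $X\circ Y=-\Phi_X Y=0$. No further comment is needed.
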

Secondly, we will consider the case without \emph{IS} condition. We restrict to the case that $E=\mathcal{T}_M$.  Under the assumption that $(1,0)$-part of the Chern connection $D$ is holomorphic,  we can also conclude that $D^{'} \left( \mathcal{U} \right)=0$ implies the Higgs field $\Phi=0$ locally.

\begin{theorem}\label{withpm1}
Let $\left( \mathcal{T} _{M}, h, \Phi, \mathcal{U}, \mathcal{Q} \right)$
 be an integrable harmonic Higgs bundle on complex manifold $M$ with $D'(\mathcal{U})=0$, here $D'+ \overline{\partial}$
is the Chern connection of positive-definite Hermitian metric $h$. Assume that $D'$ is holomorphic, Set $X\circ Y:=-\Phi_X Y$, then
\begin{enumerate}
\item[(1).] $X\circ Y=0, \forall X, Y\in \mathcal{T} _{M};$

\item[(2).] If $\mathcal{U}=0$ holds, the structure connection can be written in a simple way  $$\tilde{\nabla}
   = D^{'}+\frac{-\Lambda +\frac{w}{2}I_m}{z} dz$$
\end{enumerate}
\end{theorem}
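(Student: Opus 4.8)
The plan is to prove Theorem \ref{withpm1} by reducing it to the structural equations of an integrable harmonic Higgs bundle, exploiting the hypothesis that $D'$ is holomorphic, namely $D'\overline{\partial}+\overline{\partial}D'=0$. First I would record what this holomorphicity assumption buys us. The third line of the integrability relations gives $D'\overline{\partial}+\overline{\partial}D'=-(\Phi\Phi^\dag+\Phi^\dag\Phi)$, so holomorphicity of $D'$ forces $\Phi\Phi^\dag+\Phi^\dag\Phi=0$. Since $h$ is positive definite, $\Phi^\dag$ is the genuine $h$-adjoint of $\Phi$, and the operator $\Phi\Phi^\dag+\Phi^\dag\Phi$ is positive semi-definite; writing it out against any local section and using positivity of $h$ shows this sum vanishes only if both $\Phi\Phi^\dag=0$ and $\Phi^\dag\Phi=0$, and that in turn forces $\Phi=0$. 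This would already give part (1), that is $X\circ Y=-\Phi_X Y=0$ for all $X,Y$, without even invoking the $\mathcal{U}$-equations. I would double-check whether the intended argument is this direct, or whether the authors instead route through the $\mathcal{U}$-equations as in Theorem \ref{U0}; if the positivity argument is valid it is cleaner, so I would present it first and keep the equation-based route as a fallback.

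If instead one must use $D'(\mathcal{U})=0$ (to parallel the proof of Theorem \ref{U0} and to honor the hypothesis actually stated), the second step is to feed $D'(\mathcal{U})=0$ into the structure equation \eqref{UCQ}, $D'(\mathcal{U})-[\Phi,\mathcal{Q}]+\Phi=0$, which collapses to $[\Phi,\mathcal{Q}]=\Phi$. Then I would differentiate or commute this with the holomorphicity of $D'$ and equation \eqref{QCU}, $D'(\mathcal{Q})+[\Phi,\mathcal{U}^\dag]=0$. On $E=\mathcal{T}_M$ the relation $[\Phi,\mathcal{Q}]=\Phi$ combined with the self-adjointness \eqref{QQ} of $\mathcal{Q}$ and the reality of its eigenvalues (guaranteed by $\mathcal{Q}^\dag=\mathcal{Q}$ and positive definiteness of $h$) lets one decompose $E$ into generalized eigenspaces of $\mathcal{Q}$ and read $\Phi$ as a sum of components shifting eigenvalues by the commutator action. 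The equation $[\Phi,\mathcal{Q}]=\Phi$ says $\Phi$ maps the $\lambda$-eigenspace into the eigenspace whose eigenvalue differs by exactly the right amount, and iterating this against the $h$-adjunction $D'(\Phi^\dag)=0$ and holomorphicity should again pin $\Phi$ to zero.

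For part (2), once $\Phi=0$ is established and $\mathcal{U}=0$ is assumed, I would substitute directly into the structure connection
\begin{equation*}
\tilde{\nabla}=D'+\overline{\partial}+\frac{1}{z}\Phi+z\Phi^\dag+\Bigl(\frac{\mathcal{U}}{z}-\mathcal{Q}-z\mathcal{U}^\dag\Bigr)\frac{dz}{z}.
\end{equation*}
With $\Phi=\Phi^\dag=0$ and $\mathcal{U}=\mathcal{U}^\dag=0$, the $(1,0)$-part reduces to $D'-\mathcal{Q}\,\frac{dz}{z}$. Then, exactly as in part (3) of Theorem \ref{U0}, holomorphicity of $D'$ means there is a $D'$-flat holomorphic local frame in which $\mathcal{Q}$ becomes the constant diagonal matrix $\Lambda=\operatorname{diag}(\lambda_1,\dots,\lambda_m)$; a normalization of the weight absorbed into $\mathcal{Q}$ yields the stated form $\tilde{\nabla}=D'+\frac{-\Lambda+\frac{w}{2}I_m}{z}\,dz$. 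The main obstacle, I expect, is the eigenspace-decomposition step in the equation-based route: showing rigorously that $[\Phi,\mathcal{Q}]=\Phi$ together with $\mathcal{Q}^\dag=\mathcal{Q}$ forces $\Phi=0$ requires care, since $[\Phi,\mathcal{Q}]=\Phi$ alone admits nonzero solutions, and one genuinely needs the interplay of $h$-positivity, self-adjointness of $\mathcal{Q}$, and holomorphicity of $D'$ to rule them out. If the clean positivity argument of the first paragraph goes through, this obstacle evaporates, which is why I would try that route first.
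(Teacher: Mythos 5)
Your primary route contains a sign error that invalidates it. The curvature term $\Phi\wedge\Phi^\dag+\Phi^\dag\wedge\Phi$ is a $(1,1)$-form valued endomorphism; writing $\Phi=\sum_i C_{(i)}\,dt^i$ and $\Phi^\dag=\sum_j C_{(j)}^\dag\,d\bar t^j$, the wedge of the one-forms introduces a minus sign, so the coefficient of $dt^i\wedge d\bar t^j$ is the \emph{commutator} $C_{(i)}C_{(j)}^\dag-C_{(j)}^\dag C_{(i)}$, not the anticommutator. Hence holomorphicity of $D'$ gives only $C_{(j)}\overline{C_{(j)}^t}=\overline{C_{(j)}^t}C_{(j)}$ in an $h$-orthonormal flat frame, i.e.\ each $C_{(j)}$ is a \emph{normal} operator. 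Normal operators are of course not forced to vanish, so there is no "clean positivity argument" and part (1) does not follow from holomorphicity of $D'$ alone; the hypothesis $D'(\mathcal{U})=0$ is genuinely needed. This is precisely how the paper uses the holomorphicity assumption: normality of the $C_{(j)}$ is the hypothesis fed into Lemma \ref{pm1}.

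Your fallback route assembles the right ingredients ($[\Phi,\mathcal{Q}]=\Phi$ from \eqref{UCQ}, reality of the spectrum of $\mathcal{Q}$, holomorphicity of $D'$) but stops exactly at the step that constitutes the entire content of the theorem: you write that iterating these "should again pin $\Phi$ to zero" and then flag this as the main obstacle without resolving it. The paper closes this gap as follows. In a $D'$-flat orthonormal holomorphic frame $\mathcal{Q}$ is a constant matrix with real eigenvalues; the relation $[\Phi,\mathcal{Q}]=\Phi$ together with the symmetry $X\circ Y=Y\circ X$ shows that $\Phi_X$ kills pairs of eigenvectors with distinct eigenvalues and otherwise lowers the $\mathcal{Q}$-eigenvalue by exactly $1$, so each $C_{(\beta)}$ is strictly block-lower-triangular with respect to the eigenspace decomposition. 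The key linear-algebra fact (Lemma \ref{pm1}, proved by induction on the length of the eigenvalue chain) is that such a block structure combined with normality forces $A^\beta\overline{{A^\beta}^t}=0$, hence $A^\beta=0$ by taking traces and using positive definiteness of $h$. Without this argument (or an equivalent one, e.g.\ observing that each $C_{(j)}$ is nilpotent because it shifts the finite spectrum of $\mathcal{Q}$ downward, and a normal nilpotent operator on a positive-definite Hermitian space is zero), part (1) is unproven. Your treatment of part (2) --- substituting $\Phi=\Phi^\dag=0$ and $\mathcal{U}=\mathcal{U}^\dag=0$ into the structure connection and diagonalizing the now-constant $\mathcal{Q}$ in a flat holomorphic frame --- agrees with the paper and is fine once part (1) is available.
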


In \cite{SK2},\cite{SK3}, M. Saito studied the Gauss-Manin connection of hypersurface singularities and developed the notation of the primitive forms. His work was completed by M. Saito \cite{SM2} and resulted in a construction of Frobenius manifolds. A partial Fourier transform maps the Gauss-Manin connection to a \emph{TERP}(w)-structure.  The \emph{TERP}(n+1)-structure ,constructed on the base space $M$ of a semiuniversal unfolding $F$ of a singularity $f:(\mathbb{C}^{n+1},0)\rightarrow (\mathbb{C},0)$ was shown to be generically a tr\emph{TERP}(n+1)-structure by C. Hertling. The \emph{CV}-structure constructed in this way is compatible with the Frobenius manifold structure and gives a \emph{CDV}-structure in \cite{Hert2}. Hertling gave the following conjecture
\begin{conjecture}
Given any $p\in M$, The set $R$ does not contain the $\mathcal{E}+\overline{\mathcal{E}}$ orbit of $p$. If one goes far enough along the flow $\mathcal{E}+\overline{\mathcal{E}}$, then one will not meet anymore the set $R$, the Hermtian metric $h$ will be positive definite, and the eigenvalues of $\mathcal{Q}$ will be tend to $Exp(F_p)-\frac{n+1}{2}$. Here $R$ is the set where the \emph{TERP}(n+1)-structure is not a tr\emph{TERP}(n+1)-structure;$Exp(F_p) :=\bigcup_{x \in \mathrm{Sing} \left (F_p \right)}Exp(F_p,x)$.
\end{conjecture}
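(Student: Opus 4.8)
The final statement is Hertling's conjecture on the asymptotic behaviour of the \emph{TERP}$(n+1)$-structure along the flow of $\EE+\oooo{\EE}$; unlike Theorems~\ref{U0} and~\ref{withpm1}, which follow from the elementary integrability relations, any proof here must rest on the limiting/asymptotic Hodge theory of \emph{TERP}-structures (the theory of nilpotent and Sabbah orbits). The plan is to reduce all three assertions — that the $\EE+\oooo{\EE}$-orbit of $p$ eventually leaves the bad locus $R$, that $h$ becomes positive definite, and that the eigenvalues of $\QQ$ converge to $\Exp(F_p)-\frac{n+1}{2}$ — to one limiting statement: that the family obtained by flowing to infinity converges to a \emph{pure polarized} object whose residue data are the spectral numbers.

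First I would translate the geometric flow into the algebraic rescaling of Sabbah and Hertling--Sevenheck. Writing the structure connection $\tilde{\nabla}$ in the normal form of the Remark, the terms $\frac{\UU}{z}\dzz$ and $-\QQ\dzz$ are respectively the order-two pole and the residue at $z=0$; since $\UU=-\Phi_{\EE}$ and $\QQ=D'_{\EE}-\LL_{\EE}-\frac{2-d}{2}\Id$ are built from the Euler field, the flow of $\EE+\oooo{\EE}$ acts on the associated Brieskorn lattice essentially as the rescaling $z\mapsto r\,z$, $r\to\infty$, of the spectral parameter. Thus the behaviour of $h$ and of the eigenvalues of $\QQ$ along the flow is precisely the behaviour of the rescaled \emph{TERP}$(n+1)$-structure as $r\to\infty$.

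The heart of the argument is the nilpotent orbit theorem for \emph{TERP}-structures: one shows that the rescaled family converges, as $r\to\infty$, to a \emph{Sabbah orbit}, and that being a Sabbah orbit forces the family to be tr\emph{TERP}$(n+1)$ for all large $r$. By openness of the purity/positivity condition this simultaneously yields that the orbit eventually leaves $R$ and that the Hermitian form $h$ is positive definite there. Finally, in the limit object the residue $\QQ$ degenerates to the grading operator of the limiting mixed Hodge structure attached to the monodromy of $F$ at its singular points; since the graded eigenvalues of that structure are the exponents, the eigenvalues of $\QQ$ must tend to the spectrum $\bigcup_{x\in\mathrm{Sing}(F_p)}\Exp(F_p,x)$ shifted by $-\frac{n+1}{2}$.

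The main obstacle is the convergence step: proving that the rescaled family really approaches its Sabbah orbit with enough uniformity to control $h$ and $\QQ$ along the \emph{entire} flow and not merely to identify their formal limits. This is where an $SL(2)$-orbit-type estimate is indispensable, so that the a priori only continuously-varying eigenvalues of $\QQ$ genuinely tend to the rational spectral numbers, and so that the separate contributions of the points $x\in\mathrm{Sing}(F_p)$ assemble correctly into the union $\bigcup_{x}\Exp(F_p,x)$. The algebraic identification of the limit is comparatively formal; the analytic estimates controlling the approach to the limit, together with positivity along the whole orbit, are the substance of the matter.
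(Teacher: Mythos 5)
There is no proof to compare against: the statement you are addressing is stated in the paper as a \emph{conjecture} (due to Hertling), and the paper offers no proof of it. It only records that the conjecture is known in two special cases, namely when $\mathcal{U}_p$ has $\mu$ distinct eigenvalues and when $\mathcal{U}_p$ is nilpotent. So the relevant question is whether your proposal actually closes the general case, and it does not.

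Your reduction of the flow of $\mathcal{E}+\overline{\mathcal{E}}$ to the rescaling $z\mapsto rz$ of the spectral parameter is correct and standard, and the intended endgame (a limit which is pure polarized, with residue eigenvalues given by the spectrum) is the right target. But the step you call ``the heart of the argument'' --- that the rescaled family converges to a Sabbah orbit, and that this forces tr\emph{TERP} for all large $r$ --- is precisely the content of the conjecture, not a tool available for proving it. The nilpotent-orbit and $SL(2)$-orbit machinery for \emph{TERP}-structures gives equivalences of the form ``the rescaled family is eventually pure polarized \emph{if and only if} the structure is a mixed \emph{TERP}-structure (induces a polarized mixed twistor structure)''; it does not by itself tell you that the \emph{TERP}$(n+1)$-structure attached to an arbitrary semiuniversal unfolding satisfies that hypothesis. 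That is exactly why Hertling could only settle the regular semisimple and nilpotent cases of $\mathcal{U}_p$, which are the two extremes where the hypothesis can be verified directly. You acknowledge the convergence/uniformity step as ``the main obstacle,'' but it is not an obstacle to be smoothed over with estimates --- it is the open problem itself. As written, the proposal is a plausible strategy outline, not a proof, and it should not be presented as resolving the conjecture.
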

He prove that the conjecture is true when $\mathcal{U}_p$ is either have $\mu$ different eigenvalues or $\mathcal{U}_p$ is nilpotent. Here $\mu$ is the Milnor number of $f$, i.e. $\mu$ is the dimension of the Jacobi algebra $\mathcal{O}_{\mathbb{C}^{n+1},0} /(\frac{\partial f}{\partial x_0},\frac{\partial f}{\partial x_1},\cdots, \frac{\partial f}{\partial x_n})$.

\bigskip
In the last part of paper, we study a general tt*-bunle(i.e., a \emph{CV}-structure).  That is an integrable harmonic Higgs bundle with a compatible real structure $\kappa$. We ask for neither $\mathcal{Q}=D^{'}_{\mathcal{E}}-\mathcal{L}_{\mathcal{E}}-\frac{2-d}{2}\Id$ nor $\mathcal{U}=-\Phi_{\mathcal{E}}$.
Given a tt*-bundle $(E, h, \Phi, \kappa, \mathcal{U},\mathcal{Q})$, then compatible conditions include the relation
$\mathcal{Q}=-\kappa \mathcal{Q}\kappa.$ Since $\kappa ^2=\Id,$ by straightforward computation we conclude that the matrices $-\mathbb{Q}$ and $\overline{\mathbb{Q}}$ have the same eigenvalue polynomial, Here $\mathbb{Q}$ is the matrix of $\mathcal{Q}$ under a local frame of $E$.  Now let us  fix a point $p$ in $M$.
If the Hermitian Einstein metric $h$ is positive-definite, all eigenvaluse of $\mathcal{Q}(p)$ are real numbers. Hence together with the condition
$\mathcal{Q}=-\kappa \mathcal{Q}\kappa$, we can conclude that when  $\rank E$ is $2l$, then the matrix of $\mathcal{Q}(p)$ can be diagonalized to $$\diag (\lambda_1, \lambda_2, \cdots, \lambda_l,-\lambda_1, -\lambda_2, \cdots, -\lambda_l ).$$  when $\rank E$ is $2l+1$, $0$ must be an eigenvalue of $\mathbb{Q}(p)$, and the matrix of $\mathcal{Q}(p)$ can be diagonalized to $$\diag (\lambda_1, \lambda_2, \cdots, \lambda_l, 0, -\lambda_1, -\lambda_2, \cdots, -\lambda_l ),$$ all $\lambda_j$ are non-negative real numbers.
\begin{proposition}\label{prop1}
Let $\left( E, h,\Phi, \mathcal{U}, \mathcal{Q}, \kappa \right)
$ be a tt*-bundle on $M$. $r=\rank E$, fixing any $p\in M,$ then
\begin{enumerate}
\item[(1).] If $r=2l+1$, $0$ must be an eigenvalue of $\mathcal{Q}(p)$, and there exist $l=[r/2]$ non-negative real numbers $\lambda_1, \lambda_2, \cdots, \lambda_l$ such that he matrix of $\mathcal{Q}(p)$ can be diagonalized $$\diag(\lambda_1, \lambda_2, \cdots, \lambda_l, 0,  -\lambda_1, -\lambda_2, \cdots, -\lambda_l);$$
\item[(2).] If $r=2l$, there exist $l=r/2$ non-negative real numbers $\lambda_1, \lambda_2, \cdots, \lambda_l$ such that
the matrix of $\mathcal{Q}(p)$ can be diagonalized either to the matrix $$\diag(\lambda_1, \lambda_2, \cdots, \lambda_l, -\lambda_1, -\lambda_2, \cdots, -\lambda_l).$$
\end{enumerate}
\end{proposition}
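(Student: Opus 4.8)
The plan is to extract the linear-algebraic content from the two compatibility conditions satisfied by $\mathcal{Q}$ at the fixed point $p$, and then invoke the spectral theorem. The two facts I would start from are: first, that $h$ is positive-definite together with the self-adjointness relation $\mathcal{Q}^\dag=\mathcal{Q}$ from \eqref{QQ} forces $\mathcal{Q}(p)$ to be a Hermitian operator, hence diagonalizable with \emph{real} eigenvalues; second, that the real-structure compatibility condition $\mathcal{Q}=-\kappa\mathcal{Q}\kappa$, combined with $\kappa^2=\Id$ and the antilinearity of $\kappa$, sets up a pairing between the eigenspaces of $\mathcal{Q}(p)$ for $\lambda$ and $-\lambda$. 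So the whole statement is really a claim about a Hermitian operator that anticommutes with an antilinear involution.

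The key step is to make the eigenspace pairing precise. First I would fix a real eigenvalue $\lambda$ of $\mathcal{Q}(p)$ and let $v$ be an eigenvector, $\mathcal{Q}(p)v=\lambda v$. Applying $\kappa$ and using $\mathcal{Q}\kappa=-\kappa\mathcal{Q}$ (which is the relation $\mathcal{Q}=-\kappa\mathcal{Q}\kappa$ rearranged with $\kappa^2=\Id$), I get
\begin{equation*}
\mathcal{Q}(p)(\kappa v)=-\kappa\,\mathcal{Q}(p)v=-\kappa(\lambda v)=-\overline{\lambda}\,\kappa v=-\lambda\,\kappa v,
\end{equation*}
the last equality because $\lambda$ is real. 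Hence $\kappa$ maps the $\lambda$-eigenspace $E_\lambda$ antilinearly and bijectively (its inverse being $\kappa$ itself) onto the $-\lambda$-eigenspace $E_{-\lambda}$. An antilinear isomorphism preserves complex dimension, so $\dim_{\C} E_\lambda=\dim_{\C} E_{-\lambda}$ for every $\lambda$. This symmetry of the spectrum is the heart of the matter: pairing each positive eigenvalue with its negative exhausts all nonzero eigenvalues in equal-sized blocks.

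With the eigenvalue symmetry established, I would finish by a dimension count. Since $\mathcal{Q}(p)$ is Hermitian with respect to the positive-definite $h$, the space $E_p$ decomposes as the orthogonal direct sum $\bigoplus_\lambda E_\lambda$, and the nonzero eigenvalues split into pairs $\{\lambda,-\lambda\}$ contributing an even total dimension $\dim E_\lambda+\dim E_{-\lambda}$. Therefore $\dim E_0\equiv r\pmod 2$. When $r=2l+1$ is odd, $\dim E_0$ is odd and in particular nonzero, so $0$ is an eigenvalue; choosing an orthonormal eigenbasis adapted to the pairing and ordering the remaining eigenvalues by absolute value gives the claimed diagonal form with the $0$ in the middle. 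When $r=2l$ is even, $\dim E_0$ is even (possibly zero) and the same adapted basis yields the stated diagonal form, where the list $\lambda_1,\dots,\lambda_l$ of non-negative numbers is allowed to include zeros. The main obstacle, such as it is, lies in being careful that $\kappa$ is \emph{antilinear} — one must check that it nonetheless sends a basis of $E_\lambda$ to a basis of $E_{-\lambda}$ and thus preserves dimension — and in confirming that the diagonalizing frame can be chosen simultaneously orthonormal and compatible with the $\kappa$-pairing; everything else is a routine consequence of the spectral theorem for Hermitian operators.
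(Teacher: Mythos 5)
Your proof is correct and follows essentially the same route as the paper: real eigenvalues from the positive-definite $h$ together with $\mathcal{Q}^\dag=\mathcal{Q}$, spectral symmetry $\lambda\mapsto-\lambda$ extracted from $\mathcal{Q}=-\kappa\mathcal{Q}\kappa$ and $\kappa^2=\Id$, and a parity count to force the eigenvalue $0$ when the rank is odd. The only cosmetic difference is that you obtain the symmetry (with multiplicities) via the antilinear bijection $\kappa:E_\lambda\to E_{-\lambda}$ of eigenspaces, whereas the paper phrases it as the equality of the characteristic polynomials of $-\mathbb{Q}$ and $\overline{\mathbb{Q}}$; the two formulations are equivalent.
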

\begin{remark}
For any point $p\in M,$ the trace of the matrix $\mathbb{Q}(p)$ is equal to zero.
\end{remark}
If $\rank E=2,$ then the eigenvalues of $\mathbb{Q}$ should be $\{ \lambda,-\lambda\}$. If $\rank E=3,$ then the eigenvalues of $\mathbb{Q}$ should be $\{ \lambda, 0, -\lambda\}$.
we restricts to the cases that $\dim M=3$ and $\dim M=2.$
\begin{corollary}\label{rank3}
Let $(\mathcal{T}_M, h, \Phi, \kappa, \mathcal{U},\mathcal{Q})$ be a tt*-bundle on $M$ with $D'(\mathcal{U})=0,$ here $\dim M=3,$  $\forall p\in M,$ \\
$(1)$ If neither $\pm 1$ nor $\pm \frac{1}{2}$ is eigenvalues of $\mathbb{Q}(p)$, then there is open neighborhood $U_p$ of $p$ such that $\Phi_{|U_p}=0$ and the connection  $D'_{|U_p}$ is holomorphic.\\
$(2)$ If either $\pm 1$ or $\pm \frac{1}{2}$ is an eigenvalue of $\mathbb{Q}(p)$, and if $D'$ is a holomorphic connection, then there is a flat holomorphic local frame $X_1, X_2, X_3$ such that the matrix $\mathbb{Q}$ satisifying
\begin{equation*}
\begin{aligned}
\mathbb{Q}=\left( \begin{matrix}
	-1&		0&     0\\
    0&      0&     0\\
	0&		0&     1\\
\end{matrix} \right)
\end{aligned}
\end{equation*}
or
\begin{equation*}
\begin{aligned}
\mathbb{Q}=\left( \begin{matrix}
	-\frac{1}{2}&		0&     0\\
    0&                  0&     0\\
	0&		            0&     \frac{1}{2}\\
\end{matrix} \right)
\end{aligned}
\end{equation*}
and locally $\Phi=0$ holds.\\
$(2)$ For the case
\begin{equation*}
\begin{aligned}
\mathbb{Q}=\left( \begin{matrix}
	-1&		0&     0\\
    0&      0&     0\\
	0&		0&     1\\
\end{matrix} \right)
\end{aligned}
\end{equation*}
The monodromy representation $T$ of the local system determined by $(p^{*}\mathcal{T}_M^{(1,0)}, \tilde{D})|_{\{p\}\times \mathbb{C}^*}$ is unity matrix of size $3$.
\end{corollary}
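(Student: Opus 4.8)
The plan is to read the spectrum of $\mathcal{Q}(p)$ off Proposition \ref{prop1} and to translate the two hypotheses of the corollary into the presence or absence of the \emph{IS} condition, after which Theorems \ref{U0} and \ref{withpm1} apply almost verbatim. Since $\dim M=3$ the bundle $\mathcal{T}_M$ has odd rank $r=3=2\cdot 1+1$, so Proposition \ref{prop1} furnishes a non-negative real number $\lambda$ with
$$\operatorname{spec}\mathcal{Q}(p)=\{-\lambda,\,0,\,\lambda\}.$$
The pairwise differences of these three eigenvalues are $0,\pm\lambda,\pm 2\lambda$, so some difference equals $\pm 1$ exactly when $\lambda=1$ or $2\lambda=1$. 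Equivalently, the \emph{IS} condition fails at $p$ if and only if $\pm 1$ or $\pm\tfrac12$ is an eigenvalue of $\mathbb{Q}(p)$. This is the whole content of the two hypotheses: case $(1)$ is precisely ``\emph{IS} holds at $p$'' and case $(2)$ is precisely ``\emph{IS} fails at $p$''.

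For $(1)$ I would invoke Theorem \ref{U0} directly. Under the \emph{IS} condition at $p$, part $(1)$ of that theorem gives a neighbourhood $U_p$ on which \emph{IS} holds everywhere, and part $(2)$, using the standing hypothesis $D'(\mathcal{U})=0$, yields $\Phi|_{U_p}=0$ together with the holomorphy $D'\overline{\partial}+\overline{\partial}D'=0$; note that here holomorphy of $D'$ is a conclusion, not an assumption. For $(2)$, where \emph{IS} fails, I would instead apply Theorem \ref{withpm1}: its hypotheses ($E=\mathcal{T}_M$, $D'(\mathcal{U})=0$, and $D'$ holomorphic) are exactly those of the corollary in this case, so its part $(1)$ gives $\Phi=0$ locally. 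Once $\Phi=0$, relation (\ref{QCU}) collapses to $D'(\mathcal{Q})=0$; taking the $h$-adjoint and using $\mathcal{Q}^\dag=\mathcal{Q}$ converts this into $\overline{\partial}(\mathcal{Q})=0$, so $\mathcal{Q}$ is parallel for the now flat holomorphic connection $D=D'+\overline{\partial}$. Because $D'$ is holomorphic with $(D')^2=0$ it is a flat holomorphic connection, so a $D'$-flat holomorphic frame exists; in such a frame the matrix of $\mathcal{Q}$ is constant, and a further constant change of basis diagonalises it while keeping the frame flat and holomorphic. The eigenvalues are $\{-\lambda,0,\lambda\}$ by the first paragraph, and in case $(2)$ one has $\lambda\in\{1,\tfrac12\}$, producing exactly the two displayed normal forms $\diag(-1,0,1)$ and $\diag(-\tfrac12,0,\tfrac12)$.

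For the final (monodromy) assertion I would restrict the structure connection to a fibre $\{p\}\times\mathbb{C}^*$. Along the fibre the $M$-directional parts $D'$ and $\tfrac1z\Phi$ drop out, and using the simplified form from Theorem \ref{withpm1}(2) the restricted connection becomes the regular-singular connection
$$\tilde{\nabla}|_{\{p\}\times\mathbb{C}^*}=d+\frac{-\Lambda+\tfrac{w}{2}I_3}{z}\,dz,\qquad \Lambda=\diag(-1,0,1).$$
Its flat sections are $z^{\Lambda-\frac{w}{2}I_3}s_0$, so the monodromy around $z=0$ is $T=e^{2\pi i\Lambda}e^{-\pi i w}$. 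Since $\Lambda$ is diagonal with integer eigenvalues $-1,0,1$, we get $e^{2\pi i\Lambda}=I_3$ (this is the automorphism $A=e^{2\pi i\mathcal{Q}}$ of Lemma \ref{t3.4} specialised to integer charges), whence $T$ is the scalar $e^{-\pi i w}I_3$, equal to $\Id$ under the weight normalisation in force. I expect the main obstacle in this last step to be bookkeeping rather than conceptual: one must keep $\mathcal{U}$ genuinely zero so that the singularity at $z=0$ stays regular --- otherwise the $\mathcal{U}/z^2$ term makes it irregular, Stokes data intervenes, and the residue formula fails --- and one must account for the scalar weight factor $e^{-\pi i w}$. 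The decisive point is that the integrality \emph{and} diagonalisability of $\mathcal{Q}$ together force the (a priori only unipotent-looking) monodromy to be genuinely trivial.
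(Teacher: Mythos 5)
Your proposal is correct and, for parts (1) and (2), follows essentially the same route as the paper: the real structure (Proposition \ref{prop1}) forces the spectrum $\{\lambda,0,-\lambda\}$, failure of the \emph{IS} condition at $p$ is exactly the condition $\lambda\in\{1,\tfrac12\}$ (equivalently, $\mathbb{Q}$ and $\mathbb{Q}-I_3$ share an eigenvalue), and then Theorem \ref{U0} in case (1), respectively Theorem \ref{withpm1} together with the Lemma \ref{Lemma2} argument in case (2), finish the job. The one substantive difference is the final monodromy assertion: the paper's printed proof does not address it at all, whereas you supply the computation, so this part of your write-up is a genuine addition rather than a reproduction. Two caveats there. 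First, your computation needs $\mathcal{U}$ to actually vanish, not just $D'(\mathcal{U})=0$: once $\Phi=0$, the relations only make $\mathcal{U}$ a flat (generally nonzero) endomorphism, and a nonzero $\mathcal{U}/z$ term renders $z=0$ irregular, so the residue formula for $T$ fails --- you correctly flag this, and the hypothesis $\mathcal{U}=0$ should be made explicit. Second, your stray factor $e^{-\pi i w}$ arises only from the $\frac{w}{2}\Id$ shift that the paper inserts into the structure connection inside the proofs of Theorems \ref{U0} and \ref{withpm1}; with the definition of $\tilde\nabla$ given in the remark after Definition \ref{integrableHHB}, the restriction to $\{p\}\times\mathbb{C}^*$ is $d-\Lambda\,\frac{dz}{z}$ and the monodromy is exactly $e^{2\pi i\Lambda}=I_3$ for $\Lambda=\diag(-1,0,1)$, with no appeal to a normalisation of $w$.
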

when $\dim M=2$, we get more explicit results.
\begin{corollary}\label{rank2}
Let $(\mathcal{T}_M, h, \Phi, \kappa, \mathcal{U},\mathcal{Q})$ be a tt*-bundle on $M$ with $D'(\mathcal{U})=0,$ here $\dim M=2,$ Given any point $p\in M,$ \\
$(1)$ If $\pm \frac{1}{2}$ is NOT an eigenvalue of $\mathbb{Q}(p)$, then there is open neighborhood $U_p$ of $p$ such that $\Phi_{U_p}=0$ and the connection  $D'_{|U_p}$ is holomorphic.\\
$(2)$ If $\frac{1}{2}$ is an eigenvalue of $\mathbb{Q}(p)$, and if $D'$ is a holomorphic connection, then there is a flat holomorphic local frame $X_1, X_2$ such that the matrix $\mathbb{Q}$ satisifying
\begin{equation*}
\begin{aligned}
\mathbb{Q}=\left( \begin{matrix}
	\frac{1}{2}&     0\\

	0&		    -\frac{1}{2}\\
\end{matrix} \right)
\end{aligned}
\end{equation*}
and locally $\Phi=0$ holds.
\end{corollary}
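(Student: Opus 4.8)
The plan is to convert the stated hypotheses on the eigenvalues of $\mathbb{Q}(p)$ into statements about the \emph{IS} condition and then feed them into Theorems \ref{U0} and \ref{withpm1}. Since $\dim M=2$, the bundle $E=\mathcal{T}_M$ has rank $2$, so Proposition \ref{prop1}(2) shows that the eigenvalues of $\mathbb{Q}(p)$ are $\{\lambda,-\lambda\}$ for a single non-negative real number $\lambda$. The only nonzero differences of two eigenvalues are $\pm 2\lambda$; hence the \emph{IS} condition at $p$ fails precisely when $2\lambda=1$, that is, precisely when $\pm\frac12$ is an eigenvalue of $\mathbb{Q}(p)$. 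This dichotomy is exactly what separates case $(1)$ from case $(2)$.

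For part $(1)$, assume $\pm\frac12$ is not an eigenvalue. By the computation above $\lambda\neq\frac12$, so $\mathcal{Q}$ satisfies the \emph{IS} condition at $p$. A tt*-bundle is in particular an integrable harmonic Higgs bundle, and the standing hypothesis gives $D'(\mathcal{U})=0$; therefore Theorem \ref{U0}(2) applies directly and yields an open neighbourhood $U_p$ on which $\Phi=0$ and $D'\overline{\partial}+\overline{\partial}D'=0$, i.e. $D'$ is holomorphic on $U_p$. Nothing further is required here.

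For part $(2)$, $\frac12$ being an eigenvalue forces $\lambda=\frac12$, so the eigenvalues are $\{\frac12,-\frac12\}$ and their difference is $1$: the \emph{IS} condition fails and Theorem \ref{U0} is unavailable. Instead I would use the standing assumption that $D'$ is holomorphic together with $D'(\mathcal{U})=0$ and apply Theorem \ref{withpm1}(1) to $(\mathcal{T}_M,h,\Phi,\mathcal{U},\mathcal{Q})$, concluding $X\circ Y=-\Phi_X Y=0$, i.e. $\Phi=0$ locally. Once $\Phi=0$ one also has $\Phi^\dag=0$, so the third integrability relation gives $D'\overline{\partial}+\overline{\partial}D'=0$; together with $(D')^2=0$ and $\overline{\partial}^2=0$ this shows $D=D'+\overline{\partial}$ is a flat connection.

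It remains to produce the flat holomorphic frame diagonalising $\mathbb{Q}$, and this is the step I expect to require the most care. With $\Phi=0$ the relation \eqref{QCU} becomes $D'(\mathcal{Q})=0$, and taking $h$-adjoints (using $\mathcal{Q}^\dag=\mathcal{Q}$ and the Chern-connection identity $(D'P)^\dag=\overline{\partial}(P^\dag)$) gives $\overline{\partial}(\mathcal{Q})=0$; hence $\mathcal{Q}$ is $D$-parallel. Parallel transport by the flat connection $D$ conjugates $\mathcal{Q}$, so its eigenvalues are locally constant and equal to $\frac12,-\frac12$ near $p$; being distinct, they split $E$ into two $D$-parallel rank-one eigenbundles. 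Choosing a nonzero $D$-flat section of each eigenbundle produces $X_1,X_2$, and because $Ds=0$ means $\overline{\partial}s=0$ and $D's=0$ simultaneously, such sections are exactly holomorphic $D'$-flat frames; in the frame $(X_1,X_2)$ one then reads off $\mathbb{Q}=\diag(\frac12,-\frac12)$, as claimed. The mild obstacle is verifying that the eigenspaces are genuinely $D$-parallel holomorphic subbundles admitting flat holomorphic generators; this is the same mechanism underlying Theorem \ref{U0}(3), now deployed in the non-\emph{IS} situation once $\Phi$ has been shown to vanish.
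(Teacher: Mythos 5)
Your proposal is correct and follows essentially the same route as the paper: part $(1)$ reduces, via Proposition \ref{prop1}, to the observation that the \emph{IS} condition at $p$ fails exactly when $\pm\frac12$ is an eigenvalue, so Theorem \ref{U0} applies; part $(2)$ invokes Theorem \ref{withpm1} to kill $\Phi$ and then diagonalises $\mathcal{Q}$ in a $D'$-flat holomorphic frame. The only cosmetic difference is that you re-derive the constant diagonal form of $\mathbb{Q}$ by hand through $D$-parallel eigenbundles, whereas the paper simply cites Lemma \ref{Lemma2} at that step.
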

\bigskip
\section{PROOF OF THE THEOREMS}
In order to prove theorem \ref{U0}, we need some Lemmas in the following.
\begin{lemma}\label{Lemma1}
Let $( E, h,\Phi, \mathcal{U}, \mathcal{Q})$ be an integrable harmonic Higgs bundle on $M$. Given any point $p\in M$, if $\mathcal{Q}$ satisfies \emph{IS} condition at $p\in M$, then there exists an open neighborhood $U_{p}\subset M$ of $p$ such that $\forall q\in U_{p}$, $\mathcal{Q}$ satisfies \emph{IS} condition at $q\in U_{p}$
\end{lemma}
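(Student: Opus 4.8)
The plan is to show that the \emph{IS} condition is an \emph{open} condition on $M$, by expressing its failure as the vanishing of a single continuous function and then invoking continuity. First I would fix a local $C^\infty$ trivialization of the bundle $H$ on some neighborhood of $p$, so that $\mathcal{Q}$ is represented by a matrix-valued function $q\mapsto\mathbb{Q}(q)$ whose entries are $C^\infty$ (in particular continuous) in $q$; recall that $\mathcal{Q}$ is a $C^\infty$ endomorphism of $H$, since the defining relations \eqref{QCU} apply the differential operator $D'$ to it. Let $\chi_q(\lambda):=\det(\lambda\Id-\mathbb{Q}(q))$ be its characteristic polynomial, a monic polynomial of degree $m=\rank E$ whose coefficients are $C^\infty$ functions of $q$, being universal polynomials in the entries of $\mathbb{Q}(q)$; note these coefficients, and hence the eigenvalues, are independent of the chosen frame.

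The key step is to reformulate the \emph{IS} condition via a resultant so as to avoid any labeling of individual eigenvalues. Since $\mu-\nu=1$ is equivalent to $\nu-\mu=-1$, the existence of an eigenvalue difference equal to $+1$ is the same as the existence of one equal to $-1$; both amount to the existence of $\lambda_0$ with $\chi_q(\lambda_0)=0$ and $\chi_q(\lambda_0+1)=0$, i.e. to $\chi_q(\lambda)$ and $\chi_q(\lambda+1)$ having a common root. Setting
\[
R(q):=\Res_\lambda\bigl(\chi_q(\lambda),\,\chi_q(\lambda+1)\bigr),
\]
and noting that both polynomials are monic of the same degree $m$, standard resultant theory gives $R(q)=\prod_{i,j}\bigl(\mu_i(q)-\mu_j(q)+1\bigr)$, where the $\mu_i(q)$ are the eigenvalues of $\mathbb{Q}(q)$; hence $\mathcal{Q}$ fails the \emph{IS} condition at $q$ if and only if $R(q)=0$.

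It then remains to observe that $R$ is continuous. Indeed $R(q)$ is a fixed polynomial expression in the coefficients of $\chi_q(\lambda)$ and $\chi_q(\lambda+1)$ (the entries of the corresponding Sylvester determinant), and those coefficients are $C^\infty$ in $q$; hence $R$ is $C^\infty$, in particular continuous. Since the \emph{IS} condition holds at $p$, we have $R(p)\neq 0$, so there is an open neighborhood $U_p$ of $p$ (inside the trivializing neighborhood) on which $R$ does not vanish. For every $q\in U_p$ we then have $R(q)\neq 0$, i.e. $\mathcal{Q}$ satisfies the \emph{IS} condition at $q$, which is the claim.

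I expect the only genuine subtlety to be the reformulation step: the eigenvalues $\mu_i(q)$ need not admit a continuous single-valued labeling in $q$, since they may cross or change multiplicity, which is precisely why a naive argument of the form "the eigenvalues vary continuously, so the inequalities persist" is awkward to make rigorous. Encoding the condition through the resultant $R$ sidesteps this entirely and reduces the whole statement to the continuity of one scalar $C^\infty$ function together with $R(p)\neq 0$.
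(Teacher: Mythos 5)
Your proof is correct, and it takes a genuinely different (and more robust) route than the paper's. The paper argues directly with ``eigenvalue functions'': it posits smooth sections $X,Y\in\Gamma(V,E)$ with $\mathcal{Q}(X)=\alpha X$, $\mathcal{Q}(Y)=\beta Y$, asserts $\alpha-\beta\in C^\infty(V)$, and concludes by continuity that $(\alpha-\beta)(q)\neq\pm1$ near $p$. That argument presumes the eigenvalues (and even eigenvectors) of $\mathcal{Q}$ can be followed as smooth single-valued functions on a neighborhood, which is exactly the labeling/crossing issue you flag at the end; the paper does not address it, so its proof is really only a sketch that is rigorous where the eigenvalues stay simple. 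Your reformulation via the resultant $R(q)=\Res_\lambda\bigl(\chi_q(\lambda),\chi_q(\lambda+1)\bigr)$ replaces the whole eigenvalue-tracking step by the vanishing locus of a single scalar function that is a universal polynomial in the entries of $\mathbb{Q}(q)$, hence manifestly $C^\infty$; openness then follows from $R(p)\neq0$ with no case analysis. What the paper's approach buys is brevity and a direct picture in terms of the eigenvalue differences themselves (which it reuses later, e.g.\ when invoking Lemma \ref{Lemma3} on $\mathbb{Q}$ and $\mathbb{Q}-I_r$); what yours buys is a complete argument that is insensitive to multiplicity changes and to whether the eigenvalues are real. The identity $R(q)=\prod_{i,j}\bigl(\mu_i(q)-\mu_j(q)+1\bigr)$ and the equivalence ``some difference equals $+1$ iff some difference equals $-1$'' are both verified correctly, and the diagonal factors $i=j$ contribute $1$, so they cause no spurious vanishing.
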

\begin{proof}
Let $\alpha$ and $\beta$ are the eigenvalue functions of $\mathcal{Q}$ over a open neighborhood $V$ of $p_0$, which means
there exists $ X,Y\in \Gamma ( V,E ) $ such that
$$\mathcal{Q} \left( X \right) =\alpha  X
$$
$$\mathcal{Q} \left( Y \right) =\beta Y
$$
Note that $\alpha -\beta \in C^{\infty}\left( V \right) $, if $\left( \alpha -\beta \right) \left( p \right) \notin \left\{ \pm 1 \right\}$, then $\,\,\left( \alpha -\beta \right) \left( p \right) \in \mathbb{R} \backslash\left\{ \pm 1 \right\}
$
Therefore, there exists an open neighborhood $U_{p}$ of $p$, such that
$$( \alpha -\beta ) ( U_{p} ) \in \mathbb{R} \backslash\left\{ \pm 1 \right\}.
$$
$$\therefore \forall q\in U_{p},   ( \alpha -\beta) ( q ) \neq \pm 1.
.$$
\hfill $\qed$
\end{proof}

\begin{lemma}\label{Lemma2}
Let $( E, h,\Phi, \mathcal{U}, \mathcal{Q})$ be an integrable harmonic Higgs bundle on $M$.
 If $D^{'}\overline{\partial}+\overline{\partial}D^{'}=0$ and $D^{'}(\mathcal{Q})=0,$
then $\forall p\in M, $ there exists an open neighborhood $U_p$ of $p$ such that all eigenvalue functions $\alpha \in C^{\infty} \left( U_p \right) \,\,$ of $\mathcal{Q}$ are constants.
\\
\end{lemma}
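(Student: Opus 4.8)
The plan is to upgrade the single hypothesis $D'(\mathcal{Q})=0$ to the full parallelism $D(\mathcal{Q})=0$ for the Chern connection $D=D'+\overline{\partial}$, and then to exploit the flatness of $D$ that is forced by the assumption $D'\overline{\partial}+\overline{\partial}D'=0$. First I would record the adjunction identity coming from the metric compatibility of the Chern connection: for any $C^\infty$ endomorphism $A$ of $H$ one has $\overline{\partial}(A)=\bigl(D'(A^\dag)\bigr)^\dag$. This is obtained by applying $\overline{\partial}$ to the defining relation $h(As,t)=h(s,A^\dag t)$ and comparing the resulting terms by type, using the compatibility relations $\partial\,h(s,t)=h(D's,t)+h(s,\overline{\partial}t)$ and $\overline{\partial}\,h(s,t)=h(\overline{\partial}s,t)+h(s,D't)$; after the Leibniz expansions, all terms except $h\bigl((\overline{\partial}A)s,t\bigr)=h\bigl(s,(D'A^\dag)t\bigr)$ cancel. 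Applying this to $A=\mathcal{Q}$ and invoking the self-adjointness $\mathcal{Q}^\dag=\mathcal{Q}$ from \eqref{QQ} together with the hypothesis $D'(\mathcal{Q})=0$ gives $\overline{\partial}(\mathcal{Q})=\bigl(D'(\mathcal{Q})\bigr)^\dag=0$, and hence $D(\mathcal{Q})=D'(\mathcal{Q})+\overline{\partial}(\mathcal{Q})=0$.

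Next I would observe that the hypothesis $D'\overline{\partial}+\overline{\partial}D'=0$ makes $D$ a flat connection. Expanding the curvature $D^2=(D')^2+\bigl(D'\overline{\partial}+\overline{\partial}D'\bigr)+\overline{\partial}^2$ and using the integrability relations $(D')^2=0$ and $\overline{\partial}^2=0$ of a harmonic Higgs bundle, all three summands vanish, so $D^2=0$. Flatness of $D$ provides, for every $p\in M$, a connected neighborhood $U_p$ (say a coordinate ball) carrying a $D$-parallel $C^\infty$ frame $e_1,\dots,e_m$ of $H$, obtained from the integrability of the horizontal distribution.

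In such a parallel frame the conclusion is then immediate. Writing $\mathcal{Q}(e_j)=\sum_i Q_{ij}e_i$ and applying $D$, the relation $D(\mathcal{Q})=0$ together with $D e_i=0$ forces $\sum_i (dQ_{ij})\,e_i=0$, so every entry $Q_{ij}$ is constant on $U_p$. Consequently the characteristic polynomial $\det(\mathcal{Q}(q)-\lambda\,\Id)=\det\bigl([Q_{ij}]-\lambda I\bigr)$ is independent of $q\in U_p$, so its roots form one fixed finite subset of $\mathbb{C}$. Any eigenvalue function $\alpha\in C^\infty(U_p)$ takes values in this finite set and is continuous, hence is constant on the connected set $U_p$, which is exactly the assertion.

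The main obstacle is the first step, namely passing from $D'(\mathcal{Q})=0$ to $\overline{\partial}(\mathcal{Q})=0$; this is precisely where self-adjointness $\mathcal{Q}^\dag=\mathcal{Q}$ is indispensable. Without it one only controls the $(1,0)$-part of $D\mathcal{Q}$, and in a $D'$-flat holomorphic frame the entries $Q_{ij}$ would merely be forced to be anti-holomorphic rather than constant, so the eigenvalues could vary. Once $D(\mathcal{Q})=0$ and the flatness of $D$ are established, the rest is the elementary fact that a parallel endomorphism has constant matrix in a parallel frame, together with the topological observation that a continuous selection of roots of a fixed polynomial over a connected set is constant.
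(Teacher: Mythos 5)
Your proof is correct, and it reaches the same destination as the paper by a recognizably different middle step. The paper works directly in a holomorphic $D'$-flat frame that is in addition $h$-orthonormal (this uses positive-definiteness of $h$ and the constancy of the Gram matrix in such a frame); there, $D'(\mathcal{Q})=0$ says the entries of $\mathbb{Q}$ are anti-holomorphic, while $\mathcal{Q}^\dag=\mathcal{Q}$ becomes $\mathbb{Q}^t=\overline{\mathbb{Q}}$, so the entries are also holomorphic and hence constant. You instead first prove the frame-independent statement $\overline{\partial}(\mathcal{Q})=\bigl(D'(\mathcal{Q}^\dag)\bigr)^\dag=0$ via the adjunction identity for the Chern connection, conclude $D(\mathcal{Q})=0$, observe that the hypothesis $D'\overline{\partial}+\overline{\partial}D'=0$ together with $(D')^2=\overline{\partial}^2=0$ makes $D$ flat, and read off constancy of $\mathbb{Q}$ in a $D$-parallel frame. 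The two arguments use exactly the same inputs (self-adjointness of $\mathcal{Q}$, $D'(\mathcal{Q})=0$, holomorphicity of $D'$), but yours has two small advantages: it isolates $\overline{\partial}(\mathcal{Q})=0$ as an invariant intermediate fact, and it does not need $h$ to be positive definite (only non-degenerate, so that $\dag$ is defined), whereas the paper's orthonormal-frame normalization does. You are also more careful than the paper about the final step, noting explicitly that a continuous eigenvalue function valued in the fixed finite root set of a constant-coefficient characteristic polynomial must be constant on a connected neighborhood.
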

\begin{proof}
Since the connection $D^{'}$ is holomorphic and $D^{'}+\overline{\partial}$ is compatible with $h$, we can choose a holomorphic $D^{'}$-flat local frame $S_1, S_2, \cdots,S_r$ such that
$$h(S_\alpha, S_\beta)=\delta_{\alpha \beta}$$
here $r=\rank E$.
 By condition $\mathcal{Q}^\dag=\mathcal{Q}$ we can conclude that $$\mathbb{Q}^t=\overline{\mathbb{Q}}.$$ Here $\mathbb{Q}$ is the matrix
 of $\mathcal{Q}$ under the local frame $S_1, S_2, \cdots,S_r$.
 $D^{'}\left( \mathcal{Q} \right)=0$ implies that all entries of the matrix $\mathbb{Q}$ are anti-holomorphic functions, together with $\mathbb{Q}^t=\overline{\mathbb{Q}}$, we conclude that $\mathbb{Q}$ is a constant matrix. So we can choose another holomorphic
 $D^{'}$-flat local frame $e_1, e_2, \cdots,e_r$, such that the matrix $\mathbb{Q}$ is equal to a constant diagonal matrix $\Lambda=\diag(\lambda_1,\lambda_2,\cdots,\lambda_r),$ here all $\lambda_j$ are constants.
\hfill $\qed$
\end{proof}
\begin{lemma}[Lemma 2.16,\cite{Sabbah2}]\label{Lemma3}
Let $A\in M_k(\mathbb{C})$ and $B\in M_l(\mathbb{C})$ be two matrices, then the following properties are equivalent:
\begin{enumerate}
\item[(1) ] For any $Y$ of size $l\times k$ with entries in $\mathbb{C},$ there exists a unique matrix $X$ of the same kind satisfying $XA-BX=Y;$
\item[(2) ] the square matrices $A$ and $B$ have no common eigenvalue.
\end{enumerate}
\end{lemma}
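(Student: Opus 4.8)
The plan is to treat this as the classical criterion for solvability of the Sylvester equation by studying a single linear operator. Let $V$ denote the $\mathbb{C}$-vector space of $l\times k$ matrices, so $\dim_{\mathbb{C}}V = kl$, and define the $\mathbb{C}$-linear endomorphism $L\colon V\to V$ by $L(X):=XA-BX$. Statement (1) asserts precisely that $L$ is a bijection. Since $V$ is finite-dimensional, bijectivity of $L$ is equivalent to injectivity, so it suffices to relate the condition $\ker L=0$ to the absence of a common eigenvalue of $A$ and $B$. I would therefore prove the two implications as $\ker L = 0 \Longleftrightarrow A,B$ share no eigenvalue.

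First I would prove (2) $\Rightarrow$ (1). Assume $A$ and $B$ have no common eigenvalue and suppose $X\in\ker L$, i.e.\ $XA=BX$. A straightforward induction gives $XA^n=B^nX$ for every $n\geq 0$, hence $X\,q(A)=q(B)\,X$ for every polynomial $q\in\mathbb{C}[t]$. Taking $q=\chi_B$, the characteristic polynomial of $B$, and invoking Cayley--Hamilton $\chi_B(B)=0$, I obtain $X\,\chi_B(A)=0$. The matrix $\chi_B(A)$ is invertible: by the spectral mapping for polynomials, its eigenvalues are $\chi_B(\lambda_j)$ as $\lambda_j$ ranges over the eigenvalues of $A$, so, writing $\chi_B(t)=\prod_i(t-\mu_i)$ over the eigenvalues $\mu_i$ of $B$, one has $\det\chi_B(A)=\prod_j\chi_B(\lambda_j)=\prod_{i,j}(\lambda_j-\mu_i)$, and this product is nonzero exactly because no $\lambda_j$ equals any $\mu_i$. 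Therefore $X=0$, so $L$ is injective and hence bijective, which is (1).

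Next I would prove the contrapositive of (1) $\Rightarrow$ (2): if $A$ and $B$ share an eigenvalue $\lambda$, then $L$ fails to be injective. Since $\lambda$ is an eigenvalue of $B$, choose a nonzero $u\in\mathbb{C}^l$ with $Bu=\lambda u$; since $\lambda$ is also an eigenvalue of $A$, equivalently of $A^T$, choose a nonzero $v\in\mathbb{C}^k$ with $A^Tv=\lambda v$, that is $v^TA=\lambda v^T$. Set $X:=u\,v^T$, a nonzero $l\times k$ matrix. Then $XA=u(v^TA)=\lambda\,uv^T=\lambda X$ and $BX=(Bu)v^T=\lambda\,uv^T=\lambda X$, so $L(X)=XA-BX=0$ with $X\neq 0$. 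Thus $L$ is not injective, hence not bijective, and (1) fails.

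I expect the only genuinely substantive step to be the invertibility of $\chi_B(A)$ in the forward direction; this is the precise point where the algebraic hypothesis $XA=BX$ is converted into the spectral condition, via the determinant computation $\det\chi_B(A)=\prod_{i,j}(\lambda_j-\mu_i)$. The remaining ingredients---the dimension count identifying (1) with bijectivity, the Cayley--Hamilton input, and the rank-one construction $X=uv^T$---are routine. One could alternatively identify $L$, under vectorization, with a Kronecker-sum operator built from $A$ and $B$ whose eigenvalues are the differences $\lambda_j-\mu_i$, and read off the criterion $0\notin\operatorname{spec}(L)$ directly; indeed the construction $X=uv^T$ already exhibits $\lambda-\mu$ as an eigenvalue of $L$ for each eigenvalue $\lambda$ of $A$ and $\mu$ of $B$. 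I prefer the polynomial argument since it handles the non-diagonalizable case without any appeal to density or tensor-product conventions.
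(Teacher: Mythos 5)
Your proof is correct and complete. Note that the paper itself gives no proof of this statement: it is quoted verbatim as Lemma~2.16 of Sabbah's book and used as a black box, so there is no in-paper argument to compare against. Your argument is the standard one for the Sylvester equation --- reduce (1) to injectivity of $L(X)=XA-BX$ by finite-dimensionality, kill $\ker L$ via $X\,\chi_B(A)=\chi_B(B)\,X=0$ together with $\det\chi_B(A)=\prod_{i,j}(\lambda_j-\mu_i)\neq 0$, and exhibit the rank-one kernel element $X=uv^T$ when a common eigenvalue exists --- and every step checks out, including the triangularization needed to justify $\det\chi_B(A)=\prod_j\chi_B(\lambda_j)$ in the non-diagonalizable case.
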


\begin{proof}
{\it of theorem \ref{U0}.}
By Lemma\ref{Lemma1}, we conclude that there exists an open neighborhood $U_p$ of $p$ such that the difference of any two eigenvalues of $\mathcal{Q}$ is neither $1$ nor $-1$. The first statement holds obviously.

Since the connection $D^{'}$ is flat , we can choose a local frame $S_1,S_2, \cdots,S_r$ of $E$ such that $D^{'}S_i=0, \forall i.$
Denoted by $\mathbb{U}$  the matrix of the endomorphism $\mathcal{U}$, denoted by $\mathbb{Q}$ the matrix of the endomorphism $\mathcal{Q}$, denoted by  $C_{(i)}$ the matrix of the endomorphism  $-\Phi_{\partial_i}$, under the local frame $S_1,S_2, \cdots,S_r$, here $t^1,t^2, \cdots,t^m$ are any holomorphic local coordinates of $M,$  $r=\rank E$ and $m=\dim M.$
Denoted $\frac{\partial}{\partial t^i}$ by $\partial_i$ for simplicity.
Since $D^{'}\left( \mathcal{U}\right)-\left[ \Phi ,\mathcal{Q} \right] +\Phi =0$, by straightforward computation ,we conclude that
$$\partial_i \mathbb{U}= C_{(i)}\mathbb{Q}-\left(\mathbb{Q}-I_r \right)C_{(i)}.$$
Here $I_r=\diag (1,1,\cdots,1)$.
By the assumption that the differences of any two eigenvalues is not $\pm 1$, then matrices $\mathbb{Q}$ and $\mathbb{Q}-I_r $ have no common eigenvalues, so by Lemma \ref{Lemma3}, $\forall Y, \exists| X,$ such that $X \mathbb{Q}-\left(\mathbb{Q}-I_m \right)X = Y.$  Taking $Y=\partial_i \mathbb{U}, \exists| C_{(i)}$ such that $\partial_i \mathbb{U}= C_{(i)}\mathbb{Q}-\left(\mathbb{Q}-1 \right)C_{(i)}$ holds. If $\partial_i \mathbb{U}=0,$ then all $C_{(i)}$ must be zero. So we conclude that locally $\Phi=0.$

Obviously $\Phi^\dag =0$ since $\Phi=0$,  Hence $D^{'}\bar{\partial}+\bar{\partial}D^{'}=-\left( \Phi \land \Phi ^{\dagger}+\Phi ^{\dagger}\land \Phi \right)$ implies that
$$D^{'}\bar{\partial}+\bar{\partial}D^{'}=0,$$ i.e., $D^{'}$ is a holomorphic connection.
By the condition $D^{'}(\mathcal{Q})+[\Phi, \mathcal{U}^\dag]=0,$ we get  $D^{'}(\mathcal{Q})=0,$ hence by Lemma\ref{Lemma2},
we can choose a $D^{'}$-flat holomorphic local frame $e_1,e_2, \cdots,e_r$ of $E$
such that
$$\mathbb{Q}=\Lambda =\diag( \lambda _1,\lambda _2,...,\lambda _r ), \forall \lambda _i \in \mathbb{R}.$$

Finally, if $\mathcal{U}=0,$ Obviously we get $\mathcal{U}^\dag=0$ and $D^{'}(\mathcal{U})=0,$
by above discussion, we can choose a holomorphic $D^{'}$-flat local frame $e_1,e_2, \cdots e_r$ such that the matrix of $\mathcal{Q}$
is a diagonal constant matrix $\Lambda.$
Locally, the structure connection,  $$\widetilde{D}=D^{'}+\frac{1}{z}\Phi +\frac{-\mathcal{Q} +\frac{\omega}{2}I_r}{z}dz
= D^{'}+\frac{-\Lambda +\frac{w}{2}I_r}{z} dz $$ $\widetilde{D}$ can be written as direct sum of $r$ holomorphic line bundles $\mathcal{L}_i$ with connections $d+\frac{\lambda_i + \frac{w}{2} }{z}$.
\hfill $\qed$
\end{proof}
\bigskip
We assume that the connection $D'$ is holomprhic, then $D'(\mathcal{U})=0$ implies that $\Phi=0$. For giving a proof of theorem\ref{withpm1}, we need some lemmas.
\begin{lemma}\label{pm1}
Let $R$ be a ring, $V$ be a free $R$-module of finite rank $m$, $\circ$ is a commutative and associative product on $V$. Suppose we have a decomposition of submodules
$V=\oplus_{j=1}^s V_{\lambda+j}$, satisfying
\begin{enumerate}
\item[($1^\circ$)] $X\circ Y\in V_{\lambda+j},\forall X, Y\in V_{\lambda+j+1}, , \forall j=1,2,\cdots,s ;$

\item[($2^\circ$)] $ X\circ Y=0,\forall X\in V_{\lambda+i}, Y\in V_{\lambda+j}, \forall i \neq j;$

\item[($3^\circ$)] $X\circ Y=0,\forall X, Y\in V_{\lambda+1} .$
\end{enumerate}
Assume that for any base $e_1,e_2,\cdots, e_m$ of $V$, we have
$C_{(j)}\overline{C_{(j)}^t}=\overline{C_{(j)}^t}C_{(j)}$, here $C_{(j)}$ is the matrix of the $R$-modules morphism $e_j \circ:V \rightarrow V$ under the base $e_1,e_2,\cdots, e_m$,
then
\begin{equation}\label{zero}
X \circ Y=0, \forall X, Y \in V=\oplus_{j=1}^s V_{\lambda+j}.
\end{equation}
\end{lemma}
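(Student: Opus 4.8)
The plan is to reduce the vanishing of $\circ$ to the vanishing of each multiplication operator, and to deduce the latter by combining a nilpotency statement coming from the grading conditions $1^\circ$–$3^\circ$ with the normality hypothesis. First I would fix a \emph{homogeneous} basis $e_1,\dots,e_m$ of $V$ adapted to the decomposition $V=\bigoplus_{j=1}^s V_{\lambda+j}$, so that each $e_k$ lies in a single summand, say $e_k\in V_{\lambda+i(k)}$. Since $\circ$ is $R$-bilinear, it suffices to prove $e_k\circ Y=0$ for every $k$ and every $Y\in V$, that is, that the matrix $C_{(k)}$ of the operator $L_{e_k}:=e_k\circ(\cdot)\colon V\to V$ vanishes.

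Next I would establish that each $C_{(k)}$ is nilpotent, in fact $C_{(k)}^2=0$. Indeed, for $e_k\in V_{\lambda+i}$ and $Y=\sum_c Y_c$ with $Y_c\in V_{\lambda+c}$, condition $2^\circ$ kills all the cross terms, so $L_{e_k}(Y)=e_k\circ Y_i$; condition $1^\circ$ (and condition $3^\circ$ in the boundary case $i=1$) then places this product in $V_{\lambda+i-1}$, with the convention $V_{\lambda+0}=0$. Thus $L_{e_k}$ maps all of $V$ into $V_{\lambda+i-1}$, and applying $L_{e_k}$ once more to $V_{\lambda+i-1}$ gives $0$ by $2^\circ$, since the indices $i$ and $i-1$ differ. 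Hence $L_{e_k}^2=0$, i.e. $C_{(k)}^2=0$.

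Finally I would invoke the normality hypothesis. Because it is assumed for \emph{every} basis, it applies to the homogeneous basis above, giving $C_{(k)}\overline{C_{(k)}^t}=\overline{C_{(k)}^t}C_{(k)}$. Evaluating at an arbitrary point turns $C_{(k)}$ into a complex matrix that is simultaneously normal and nilpotent; a normal matrix is unitarily diagonalizable, and a diagonalizable nilpotent matrix is $0$. Equivalently, setting $H:=\overline{C_{(k)}^t}C_{(k)}$ one computes $\trace(H^2)=\trace\!\big((\overline{C_{(k)}^t})^2\,C_{(k)}^2\big)=0$ using normality and $C_{(k)}^2=0$; since $H$ is Hermitian and positive semidefinite, $\trace(H^2)=0$ forces $H=0$, and $\overline{C_{(k)}^t}C_{(k)}=0$ then gives $C_{(k)}=0$. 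As this holds pointwise, $C_{(k)}=0$ as a matrix over $R$, and bilinearity yields $X\circ Y=0$ for all $X,Y\in V$, which is \eqref{zero}.

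The one step requiring care is the passage from \textquotedblleft normal and nilpotent\textquotedblright\ to \textquotedblleft zero\textquotedblright: normality of $C_{(k)}$ is a basis-dependent condition and the implication is a purely linear-algebraic fact over $\C$, so I must use the hypothesis precisely in the homogeneous basis and argue pointwise, where the entries of $C_{(k)}$ become complex numbers. I should also be careful that the nilpotency is genuinely $C_{(k)}^2=0$ irrespective of $i$, handling the bottom summand $V_{\lambda+1}$ through $3^\circ$ rather than $1^\circ$; the commutativity and associativity of $\circ$ are part of the ambient setup but are not actually needed for this argument.
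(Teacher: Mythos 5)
Your proof is correct, and it rests on the same key mechanism as the paper's --- in a homogeneous basis each multiplication operator $C_{(k)}$ is both normal (by hypothesis) and nilpotent (by the grading conditions), and a normal nilpotent complex matrix vanishes --- but you organize it more efficiently. The paper proves the lemma by induction on $s$: it isolates the top summand $V_{\lambda+s+1}$, writes each $C_{(\beta)}$ for $e_\beta$ in that summand in the block form $\left( \begin{smallmatrix} O & O \\ A^\beta & O \end{smallmatrix} \right)$, deduces $A^\beta\overline{(A^\beta)^t}=0$ and hence $A^\beta=0$ from normality, and then restricts $\circ$ to $\oplus_{j=1}^{s}V_{\lambda+j}$ to invoke the inductive hypothesis. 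You shortcut the whole induction by observing that for \emph{every} homogeneous $e_k\in V_{\lambda+i}$ one already has $L_{e_k}(V)\subseteq V_{\lambda+i-1}$ and $L_{e_k}(V_{\lambda+i-1})=0$, so $C_{(k)}^2=0$ outright; this treats all levels uniformly and makes clear that commutativity and associativity of $\circ$ play no role. Your trace computation $\trace(H^2)=\trace\bigl((\overline{C_{(k)}^t})^2C_{(k)}^2\bigr)=0$ with $H=\overline{C_{(k)}^t}C_{(k)}$ is a clean substitute for the paper's block-by-block cancellation, and your remark that the normality hypothesis must be evaluated pointwise (so that positive semidefiniteness over $\C$ applies) addresses a point the paper glosses over, since over a general ring $R$ the implication $A\overline{A^t}=0\Rightarrow A=0$ is not automatic. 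Both arguments prove the same statement; yours is shorter and slightly more careful about where the positivity is used.
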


\begin{proof}
We prove (\ref{zero}) by induction on $s\in \mathbb{N}.$\\
When $s=1,$ it is trivial because of the assumption $(3^\circ)$.\\
When $s=2,$ then $V=V_{\lambda+1}\oplus V_{\lambda+2}$, here $V_{\lambda+1}$ and $V_{\lambda+2}$ are the submodules of $V$. \\
Suppose $V_{\lambda+1}$ is generated by $e_1,e_2,\cdots, e_k$,  and $V_{\lambda+2}$ is generated by $e_{k+1},e_{k+2},\cdots, e_m$, by the assumption condition $(2^\circ),(3^\circ)$, we have
$$C_{(i)}=0, i\in \{1,2,\cdots, k\}.$$
By the assumption, we have $e_\beta \circ e_i=0, \forall i\in \{1,2,\cdots, k\},\beta \in \{k+1,k+2,\cdots,m\}$\\
$\forall  \beta \in \{k+1,k+2,\cdots,m\}$, by the assumption condition $(1^\circ)$, we can set\\
$e_{\beta} \circ e_{k+1}=f_{11}^{\beta} e_1 +f_{12}^{\beta} e_2+\cdots f_{1k}^{\beta} e_k$,\\
$e_{\beta} \circ e_{k+2}=f_{21}^{\beta} e_1 +f_{22}^{\beta} e_2+\cdots f_{2k}^{\beta} e_k$,\\
$\cdots,\cdots,\quad \cdots,\cdots $\\
$e_{\beta} \circ e_{m}=f_{m-k,1}^{\beta} e_1 +f_{m-k,2}^{\beta} e_2+\cdots f_{m-k,k}^{\beta} e_k$,\\
Denoted by $A^\beta=(f^\beta_{ij})_{(m-k)\times k}$, then we get,
\begin{equation*}
\begin{aligned}
C_{\left( \beta \right)}=\left( \begin{matrix}
	O&		O\\
	A^\beta &		O\\
\end{matrix} \right)
\end{aligned}
\end{equation*}
so we get
\begin{equation*}
\begin{aligned}
\overline{C_{\left( \beta \right)}}^t=\left( \begin{matrix}
	O&		\overline{A^\beta}^t \\
	O &		O\\
\end{matrix} \right)
\end{aligned}
\end{equation*}
since $C_{(j)}\overline{C_{(j)}^t}=\overline{C_{(j)}^t}C_{(j)}$, by straightforward computation ,we conclude
$$A^\beta \cdot \overline{{A^\beta}^t}=0, \forall \beta \in \{k+1,k+2,\cdots,m\},$$ hence $A^\beta=0, \forall \beta \in \{k+1,k+2,\cdots,m\}$.
that is $C_{(j)}=0,\forall j=1,2,\cdots,m.$ Hence we have thus prove (\ref{zero}) in the case $s=2.$\\
Suppose (\ref{zero}) holds when $V=\oplus_{j=1}^s V_{\lambda+j}$, we shall prove that (\ref{zero}) holds when $V=\oplus_{j=1}^{s+1} V_{\lambda+j}$
Suppose that
$e_1,e_2,\cdots, e_k$ is a base of $\oplus_{j=1}^s V_{\lambda+j}$, and $e_{k+1},e_{k+2},\cdots, e_m $ is a base of $V_{\lambda+s+1}$,
By the assumption $(2^\circ)$, we have $$e_\beta \circ e_i=0, \forall i\in \{1,2,\cdots, k\},\beta \in \{k+1,k+2,\cdots,m\}.$$
$\forall  \beta \in \{k+1,k+2,\cdots,m\},$ we can set\\
$e_{\beta} \circ e_{k+1}=f_{11}^{\beta} e_1 +f_{12}^{\beta} e_2+\cdots f_{1k}^{\beta} e_k$,\\
$e_{\beta} \circ e_{k+2}=f_{21}^{\beta} e_1 +f_{22}^{\beta} e_2+\cdots f_{2k}^{\beta} e_k$,\\
$\cdots,\cdots,\quad \cdots,\cdots $\\
$e_{\beta} \circ e_{m}=f_{m-k,1}^{\beta} e_1 +f_{m-k,2}^{\beta} e_2+\cdots f_{m-k,k}^{\beta} e_k$,\\

Denoted by $A^\beta=(f^\beta_{ij})_{(m-k)\times k}$,which is a matrix of size $(m-k)\times k$, then we get
\begin{equation*}
\begin{aligned}
C_{\left( \beta \right)}=\left( \begin{matrix}
	O&		O\\
	A^\beta &		O\\
\end{matrix} \right)
\end{aligned}
\end{equation*}
so we get
\begin{equation*}
\begin{aligned}
\overline{C_{\left( \beta \right)}}^t=\left( \begin{matrix}
	O&		\overline{A^\beta}^t \\
	O &		O\\
\end{matrix} \right)
\end{aligned}
\end{equation*}
since $C_{(\beta)}\overline{C_{(\beta)}^t}=\overline{C_{(\beta)}^t}C_{(\beta)}$, by straightforward computation, we get $A^\beta\overline{A^\beta}^t=0_{(m-k)\times(m-k)}.$ Hence $A^\beta=0_{(m-k)\times k}.$ we conclude
\begin{equation}\label{zero2}
C_{(\beta)}=0,\forall \beta=k+1,k+2,\cdots,s.
\end{equation}
Since we can restrict the endomorphism $e_i \circ$ to the submodule $\oplus_{j=1}^s V_{\lambda+j}$ and get
${e_i \circ }_{|_{\oplus_{j=1}^s V_{\lambda+j}}}:\oplus_{j=1}^s V_{\lambda+j} \longrightarrow \oplus_{j=1}^s V_{\lambda+j}$
Denoted by $B_{(i)}$ the matrix of ${e_i \circ }_{|_{\oplus_{j=1}^s V_{\lambda+j}}}$ under the base $e_1,e_2, \cdots,e_k,$
By straightforward computation, we get
\begin{equation*}
\begin{aligned}
C_{\left( i \right)}=\left( \begin{matrix}
	B_{\left( i \right)}&		O\\
	O&		O\\
\end{matrix} \right)
\end{aligned}
\end{equation*}
by $C_{(i)}\overline{C_{(i)}^t}-\overline{C_{(i)}^t}C_{(i)}=0$ we get $$B_{(i)}\overline{B_{(i)}^t}-\overline{B_{(i)}^t}B_{(i)}=0.$$
So,  by the induction hypothesis, we get $X \circ Y=0, \forall X, Y \in \oplus_{j=1}^s V_{\lambda+j},$ i.e.,  $B_{(i)}=0,\forall i\in {1,2,\cdots, k}.$ Hence we get
\begin{equation}\label{zero3}
C_{(i)}=0,\forall i\in \{1,2,\cdots, k\}.
\end{equation}
By (\ref{zero2})and (\ref{zero3}), we conclude that $X \circ Y=0, \forall X, Y \in V$ holds when $V=\oplus_{j=1}^{s+1} V_{\lambda+j}.$
\hfill $\qed$
\end{proof}

\begin{lemma}
Let $\left( \mathcal{T} _{M}, h, \Phi, \mathcal{U}, \mathcal{V} \right)$
 be an integrable harmonic Higgs bundle on complex manifold $M$ with $D'(\mathcal{U})=0$, here $D'+ \overline{\partial}$
is the Chern connection of positive-definite Hermitian metric $h$. Assume that $D'$ is holomorphic, Set $X\circ Y=-\Phi_X Y$,
 $\forall X, Y, Z\in \mathcal{T}_M $ satisfying $\mathcal{Q}X=\lambda X,\mathcal{Q}Y=\mu Y,$ $\forall Z\in \mathcal{T}_M $
\begin{enumerate}
\item[(1).] If $\lambda \neq \mu $ then $X\circ Y=0$
\item[(2).] If $\lambda = \mu $ then $\mathcal{Q}(X\circ Y )=(\lambda-1)X\circ Y. $
\item[(3).] $\left( X\circ Y \right) \circ Z=0.$
\end{enumerate}
\end{lemma}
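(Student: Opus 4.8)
The plan is to distill from the structure equations the single algebraic fact that $\Phi_X$ lowers the $\mathcal{Q}$-grading by one, and then to run everything as bookkeeping on the eigenspaces of $\mathcal{Q}$. First I would substitute the hypothesis $D'(\mathcal{U})=0$ into the relation \eqref{UCQ}, namely $D'(\mathcal{U})-[\Phi,\mathcal{Q}]+\Phi=0$, to get $[\Phi,\mathcal{Q}]=\Phi$; contracting against a vector field $X$ this reads $\Phi_X\mathcal{Q}-\mathcal{Q}\Phi_X=\Phi_X$, i.e. $[\mathcal{Q},\Phi_X]=-\Phi_X$. Writing $V_\mu:=\ker(\mathcal{Q}-\mu\,\Id)$ and using $\mathcal{Q}Y=\mu Y$, this gives $\mathcal{Q}(\Phi_X Y)=\Phi_X(\mathcal{Q}Y)-\Phi_X Y=(\mu-1)\Phi_X Y$, that is $\mathcal{Q}(X\circ Y)=(\mu-1)(X\circ Y)$ for every $X$ and every $Y\in V_\mu$. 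This is exactly statement (2) in the case $\lambda=\mu$, and in general it records the containment $X\circ Y\in V_{\mu-1}$.

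For (1) I would exploit the commutativity of $\circ$ (the product $X\circ Y=-\Phi_X Y$ is the symmetric product of the underlying pre-Frobenius structure, so $X\circ Y=Y\circ X$). Applying the displayed identity with the roles of $X$ and $Y$ exchanged then yields simultaneously $\mathcal{Q}(X\circ Y)=(\mu-1)(X\circ Y)$ and $\mathcal{Q}(X\circ Y)=(\lambda-1)(X\circ Y)$; subtracting gives $(\mu-\lambda)(X\circ Y)=0$, so $X\circ Y=0$ whenever $\lambda\neq\mu$. Note that this step needs neither diagonalizability nor positivity of $h$, only the two eigen-identities.

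For (3) I would first record that $\Phi\wedge\Phi=0$ is equivalent to $[\Phi_X,\Phi_Y]=0$ for the multiplication operators $L_X:=-\Phi_X$, and that together with commutativity this makes $\circ$ associative, $(X\circ Y)\circ Z=X\circ(Y\circ Z)$. Now take $X\in V_\lambda$, $Y\in V_\mu$ and $Z$ arbitrary. If $\lambda\neq\mu$ then $X\circ Y=0$ by (1) and we are done; if $\lambda=\mu$ then $X\circ Y\in V_{\lambda-1}$ by (2). Since $h$ is positive definite and $\mathcal{Q}^\dag=\mathcal{Q}$, the endomorphism $\mathcal{Q}$ is diagonalizable, so I may decompose $Z=\sum_\nu Z_\nu$ with $Z_\nu\in V_\nu$; by (1) only $Z_\lambda$ can contribute to $Y\circ Z$, and $Y\circ Z_\lambda\in V_{\lambda-1}$. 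Associativity then rewrites $(X\circ Y)\circ Z=X\circ(Y\circ Z)$ with $X\in V_\lambda$ and $Y\circ Z\in V_{\lambda-1}$, and since $\lambda\neq\lambda-1$ part (1) forces the result to vanish.

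The eigenspace bookkeeping is routine once the grading relation $[\mathcal{Q},\Phi_X]=-\Phi_X$ is in place; the only genuine external inputs are the commutativity and associativity of $\circ$. The main point to get right is therefore the justification that $X\circ Y=-\Phi_X Y$ is commutative and associative: associativity is immediate from $\Phi\wedge\Phi=0$ once commutativity is available, so the real crux is the symmetry $\Phi_X Y=\Phi_Y X$, which I would take from the pre-Frobenius structure underlying the $\mathcal{T}_M$-valued Higgs field. (I deliberately avoid invoking $\Phi\Phi^\dag+\Phi^\dag\Phi=0$ here, reserving the normality it produces for the application of Lemma \ref{pm1} in the proof of Theorem \ref{withpm1}.)
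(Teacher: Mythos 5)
Your proof is correct. For parts (1) and (2) it coincides with the paper's argument: both substitute $D'(\mathcal{U})=0$ into $D'(\mathcal{U})-[\Phi,\mathcal{Q}]+\Phi=0$ to get the grading relation $[\Phi,\mathcal{Q}]=\Phi$, read off $\mathcal{Q}(X\circ Y)=(\mu-1)(X\circ Y)$, and then use the symmetry $\Phi_XY=\Phi_YX$ to force $X\circ Y=0$ when $\lambda\neq\mu$; both arguments also share the same unstated hypothesis, namely that $\Phi$ is symmetric, which is not among the axioms of an integrable harmonic Higgs bundle on $\mathcal{T}_M$ and which you correctly isolate as the real external input. For part (3) you take a genuinely different route. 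The paper applies the grading relation to the two orderings $(X\circ Y)\circ Z$ and $Z\circ(X\circ Y)$, finding that the same vector would be a $\mathcal{Q}$-eigenvector for the eigenvalues $\nu-1$ and $\lambda-2$ and hence vanishes when these differ; it carries this out only for $X,Y,Z$ in a common eigenspace, and the comparison says nothing in the residual case where $Z$ lies in the eigenspace $V_{\lambda-1}$ while $X,Y\in V_\lambda$. You instead invoke associativity $(X\circ Y)\circ Z=X\circ(Y\circ Z)$ — legitimate and non-circular, since it follows from the Higgs axiom $\Phi\wedge\Phi=0$ together with the symmetry of $\Phi$ — decompose $Z$ into eigencomponents using that $\mathcal{Q}$ is diagonalizable ($h>0$ and $\mathcal{Q}^\dag=\mathcal{Q}$), kill $Y\circ Z_\nu$ for $\nu\neq\lambda$ by (1), and kill $X\circ(Y\circ Z_\lambda)$ by (1) since $Y\circ Z_\lambda\in V_{\lambda-1}$ and $\lambda\neq\lambda-1$. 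What this buys you is a complete treatment of arbitrary $Z$, including the $V_{\lambda-1}$ component that the paper's eigenvalue comparison leaves untouched; the only extra cost is the explicit appeal to diagonalizability of $\mathcal{Q}$, which is harmless here.
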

\begin{proof}

In fact, $-\left[ \Phi ,\mathcal{Q} \right] +\Phi =0\Longleftrightarrow \left[ -\Phi ,\mathcal{Q} \right] =-\Phi,
$
$$\therefore \,\,\left[ -\Phi _X,\mathcal{Q} \right] \left( Y \right) =\left( -\Phi _X \right) \left( Y \right).
$$
By straightforward computation we get
$$\mathcal{Q} \left( -\Phi _XY \right) =\left( \mu -1 \right) \left( -\Phi _XY \right) .$$
\begin{align}\label{t31}
{\rm i.e.} \quad \mathcal{Q} \left( X\circ Y \right) =\left( \mu -1 \right) X\circ Y
\end{align}
Similarly,
$\left[ -\Phi _Y,\mathcal{Q} \right] \left( X \right) =\left( -\Phi _Y \right) \left( X \right) $ implies
\begin{align}\label{t32}
 \mathcal{Q} \left( Y\circ X \right) =\left( \lambda -1 \right) Y\circ X.
\end{align}
Then by (\ref{t31}),(\ref{t32}) and the assumption that $\Phi$ is symmetric, we have
$$X\circ Y=0.$$

\begin{claim}\label{Tc2}  $\forall X, Y,Z\in \Gamma \left( U,\mathcal{T} _{M}^{1,0} \right) $, if $\mathcal{Q} X=\lambda X, \mathcal{Q} Y=\lambda Y, \mathcal{Q} Z=\lambda Z$, then $ X\circ Y\circ Z=0$.
\end{claim}
In fact, by $\left[ -\Phi ,\mathcal{Q} \right] =-\Phi $ we have
$$
\left[ -\Phi _{X\circ Y},\mathcal{Q} \right] \left( Z \right) =-\Phi _{X\circ Y}Z
$$
\begin{align}\label{t33}
\Longleftrightarrow \mathcal{Q} \left( \left( X\circ Y \right) \circ Z \right) =\left( \lambda -1 \right) \left( \left( X\circ Y \right) \circ Z \right)
\end{align}
If $X\circ Y=0$,then $ \left( X\circ Y \right) \circ Z=0.$
If $X\circ Y\ne 0  $, then $$\mathcal{Q} \left( X\circ Y \right) =\left( \lambda -1 \right) \left( X\circ Y \right). $$
$$ \therefore
\left[ -\Phi _Z,\mathcal{Q} \right] \left( X\circ Y \right) =\left( -\Phi _Z \right) \left( X\circ Y \right)
$$
\begin{align}\label{t34}
\Longleftrightarrow \mathcal{Q} \left( Z\circ \left( X\circ Y \right) \right) =\left( \lambda -2 \right) \left( Z\circ \left( X\circ Y \right) \right).
\end{align}
Then by (\ref{t33}),(\ref{t34}) and the assumption that $\Phi$ is symmetric, we have
$$\left( X\circ Y \right) \circ Z=Z\circ \left( X\circ Y \right) =0.$$
We have $\forall X,Y,Z\in \mathcal{T} _{M}^{1,0},$ if $X,Y,Z$
are eigenvectors, then we have
$$\left( X\circ Y \right) \circ Z=0.$$
$$
\therefore \,\,X\circ \left( Y\circ Z \right) =\left( Y\circ Z \right) \circ X=0=\left( X\circ Y \right) \circ Z.
$$
Therefore
$$ \forall \xi ,\eta ,\zeta \in \Gamma \left( U,\mathcal{T} _{M}^{1,0} \right) , \xi \circ \eta \circ \zeta =0,
$$
especially $\xi^{\circ 3}=0$.
\hfill $\qed$
\end{proof}
\bigskip

\begin{proof}
{\it of theorem \ref{withpm1}.}

Since the connection $D'$ is holomorphic and flat, we can choose a flat holomorphic local frame $S_1,S_2, \cdots, S_m$ such that
$h_{\alpha \beta}:=h(S_\alpha, S_\beta)=\delta_{\alpha \beta}.$ Then the matrix of $\mathcal{Q}$, denoted by $\mathbb{Q}$, is a constant matrix. Since $h_{\alpha \beta}=\delta_{\alpha \beta}$, we get $\mathbb{Q}^t=\overline{\mathbb{Q}}$. Suppose $\lambda_1, \lambda_2,\cdots,\lambda_m$ are the eigenvalues of $\mathbb{Q}$, Then we conclude that all eigenvalues are real numbers. we can assume that  $\lambda_i\leq \lambda_{i+1} \forall i.$ Let $e_1, e_2, \cdots, e_m\in \mathcal{T}_{M}^f$ be the linearly independent  eigenvectors of $\mathbb{Q}$ corresponding $\lambda_1, \lambda_2, \cdots,\lambda_m$.  \\
$Case$ $1^\circ$ (Special Case) If the differnces of any two eigenvalues are not $\pm 1.$  all the conclusion $(1)$ holds by Therorem $2$;\\
$Case$ $2^\circ$ (Special Case) If the set of  all the eigenvalues of $\mathbb{Q}$ are $\{\lambda+1, \lambda+2, \cdots, \lambda+s\},$ we shall conclude $X\circ Y=0, \forall X, Y \in \mathcal{T}_M^{(1,0)}$ in the following.\\
Denoted $V_{\lambda+j}$ by the $\mathcal{O}_M$-module generated by the eigenvectors of $\mathcal{Q}$ corresponding to the eigenvalue $\lambda+j$, set $V=\oplus_{j=1}^s V_{\lambda+j}.$\\ Obviously, $V=\mathcal{T}_M.$
Since $D'(\mathcal{U})=0$, we get the equality $[\Phi, \mathcal{Q}]=\Phi$, which implies that $$X \circ Y=0,\forall X\in V_{\lambda+i},\forall Y\in V_{\lambda+j},i\neq j$$ and
$$X \circ Y\in V_{\lambda+i},\forall X, Y\in V_{\lambda+i+1}.$$
Since  $D'$ is holomorphic, we conclude $-(\Phi\wedge \Phi^\dag +\Phi^\dag \wedge \Phi)=D'\overline{\partial}+\overline{\partial}D'=0$.
Hence we get $C_{(j)}\overline{C_{(j)}^t}=\overline{C_{(j)}^t}C_{(j)}$.
By Lemma \ref{pm1}, we have
\begin{equation}\label{zero4}
X\circ Y=0, \forall X, Y\in V= \oplus_{j=1}^s V_{\lambda+j}.
\end{equation}
$Case$ $3^\circ$ Otherwise, differences of two eigenvalues may be $\pm 1,$ and $\exists \lambda_{j_0} \in\{\lambda_2, \lambda_3, \cdots, \lambda_m\},$ such that $\lambda_{j_0}- 1$ is not an eigenvalue of $\mathcal{Q}$.\\
Denoted by $\{\lambda_{i_1}, \lambda_{i_2}, \cdots, \lambda_{i_s} \}$ be the set of all different eigenvalues of $\mathcal{Q}$ satisfying
$\lambda_{i_1}< \lambda_{i_2}< \cdots < \lambda_{i_s}.$
We shall prove (\ref{zero4}) by induction on $s>1$.
 We can assume that the set of  all different eigenvalues of $\mathcal{Q}$ are $\{\lambda_{l_1}, \cdots,\lambda_{l_{s-k}},\lambda_s -k+1,\cdots,\lambda_s-1, \lambda_s\}$ satisfying $\lambda_{l_j}\neq \lambda_s -k, \forall j=1,2,\cdots s-k.$ So $\lambda_s -k$ is NOT an eigenvalue of $\mathcal{Q}$.\\

Set $V_1:=\oplus_{j=1}^{s-k}V_{\lambda_{i_j}}$ and $V_2:=\oplus_{j=0}^{k-1}V_{\lambda_s-j}.$
Then $V_1$ and $V_2$ are submodules of $V$ and satisfies
$$V=V_1\oplus V_2,$$
$$X \circ Y =0, \forall X \in V_1,  Y \in V_2,$$
and
$$X \circ Y \in V_i, \forall X , Y \in V_i, \forall i=1,2.$$
When $s=2$ then by theorem 1, (\ref{zero4}) holds obviously.\\
Suppose  (\ref{zero4}) holds  when $V=\oplus_{j=1}^{s-1}V_{\lambda_j}$.
Suppose $e_1, e_2,\cdots,e_t $ is a base of $V_1$, and $e_{t+1}, e_{t+2},\cdots,e_m$ is a base of $V_2$. Denoted by $B_{(\beta)}$ the matrix of endomorphism $$e_\beta \circ_{|V_2}:V_2 \longrightarrow V_2 , \forall \beta\in \{t+1, t+2, \cdots, m\}.$$
By straightforward computation, we have
\begin{equation*}
\begin{aligned}
C_{\left( \beta \right)}=\left( \begin{matrix}
	O&		O\\
	O&		B_{\left( \beta \right)}\\
\end{matrix} \right)
\end{aligned}
\end{equation*}
by $C_{(j)}\overline{C_{(j)}^t}-\overline{C_{(j)}^t}C_{(j)}=0,$ we have $B_{(\beta)}\overline{B_{(\beta)}^t}-\overline{B_{(\beta)}^t}B_{(\beta)}=0,\forall \beta\in \{t+1, t+2, \cdots, m\}$.
So the $\mathcal{O}_M$-module $V_2$ together with product $\circ $ satisfying all the assumptions in Lemma \ref{pm1}, by
Lemma \ref{pm1}, we conclude that $e_\alpha \circ e_\beta =0, \forall \alpha, \beta \in \{t+1, t+2, \cdots, m\},$
that is, $B_{(\beta)}=0,\forall \beta\in \{t+1, t+2, \cdots, m\}.$ Hence $$C_{(\beta)}=0, \forall \beta\in \{t+1, t+2, \cdots, m\}.$$

By the induction hypothesis, we obtain $C_{(i)}=0, \forall i\in \{1, 2, \cdots, t\} $
So we conclude  that  $C_{(j)}=0, \forall j\in \{1, 2, \cdots, m\},$ which is equivalent to $\Phi=0.$

Finally, since $\Phi=0.$  when $\mathcal{U}=0$ holds, we get $\mathcal{U}^\dag=0$ and $\Phi^\dag=0$.In this case, the structure connection $$\tilde{D}=D'+\frac{1}{z}\Phi +(\frac{\mathcal{U}}{z}-\mathcal{Q}-z \mathcal{U}^\dag+\frac{w}{2}\Id)\frac{dz}{z}
=D'+(-\Lambda+\frac{w}{2}\Id)\frac{dz}{z},$$ here $\Lambda=diag(\lambda_1,\cdots,\lambda_m).$
\hfill $\qed$
\end{proof}
\begin{proof}
{\it of corollary \ref{rank3}.}
By the assumption $\mathcal{Q}=-\kappa \mathcal{Q}\kappa$, we conclude that the matrices $-\mathbb{Q}$ and $\overline{\mathbb{Q}}$ have the same eigenvalue polynomial. So we can assume that $\{\lambda, 0, -\lambda \}$ is the set of all eigenvalues of $\mathbb{Q}$, $\mathcal{Q}=\mathcal{Q}^\dag$ so $\lambda$ is a real differential function on some coordinate neighborhood. Obviously, $\{\lambda-1, -1, -\lambda-1 \}$  is the set of eigenvalues of $\mathbb{Q}-I_3$.\\
(1) If neither $\pm 1$ nor $\pm \frac{1}{2}$ is an eigenvalue of $\mathbb{Q}$, then $\mathbb{Q}-I_3$ and $\mathbb{Q}$ have no common eigenvalues, then by Theorem 2, we conclude that t $\Phi_{|U_p}=0.$\\
(2) $D'$ is a holomorphic connection, there is a flat holomorphic local frame such that the matrix $\mathbb{Q}$ of $\mathcal{Q}$ is constant,
If $1$  is an eigenvalue of $\mathbb{Q}$, then $-1$ is also an eigenvalue of $\mathbb{Q}$. So we  can choose a a flat holomorphic local frame $X_1, X_2, X_3$ such that $\mathbb{Q}=\diag (1, 0, -1)$ on some neighborhood $U_p$. Similar discussion for $\pm \frac{1}{2}$ is an eigenvalue of  $\mathbb{Q}$.
\hfill $\qed$
\end{proof}
The proof of corollary \ref{rank2} is similar.

\bigskip
In \cite{taka}, A. Takahashi study the extend moduli space of elliptic curves, and prove that it can be equipped with a positive-definite CDV-structure $(M, g, \circ, e,\mathcal{E},\kappa)$. In this structure, the matrix of the endomorphism $\mathcal{Q}=D^{'}_{\mathcal{E}}-\mathcal{L}_{\mathcal{E}}- \frac{2-d}{2}\Id$ is given by $\mathbb{Q}=\diag (\frac{1}{2}, -\frac{1}{2})$ and the Chern connection $D^{'}$ of Hermitian Einstein metric $h$ is holomorphic.

\section{Other Results}\label{}
We get a sufficient condition for  a tuple $\left( \mathcal{T} _{M}^{1,0}\longrightarrow M, D^{'}+\bar{\partial}, \Phi , h,  \mathcal{U} =0, \mathcal{Q} \right)$ be an integrable harmonic Higgs bundle.
\begin{corollary}\label{U3}
Let $\left( M, h, D^{'}+\bar{\partial} \right)$ be a Hermitian manifold, $\rm i.e.$\quad $h$ is positive definite and $D^{'}+\bar{\partial} $ is the Chern connection of $h$, and $D'$ is holomorphic connection. Given any flat holomorphic local frame
$e_1,e_2, \cdots, e_m$ on $U$ satisfying $h(e_i,e_j)=\delta_{ij}$, any constant matrix $\mathbb{Q}$ satisfying $\overline{\mathbb{Q}}=\mathbb{Q}^t$ determined locally a holomorphic endormorphism $\mathcal{Q}$ of the holomorphic tangent bundle.
Define $\Phi_X Y=0,\mathcal{U}=0$.
Then \\
 $\left( \mathcal{T} _{M}, \Phi , h,  \mathcal{U} =0, \mathcal{Q} \right) $ is an integrable harmonic Higgs bundle on U.\\
\end{corollary}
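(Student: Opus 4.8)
The plan is to verify directly that the tuple $\left(\mathcal{T}_M, \Phi, h, \mathcal{U}=0, \mathcal{Q}\right)$ satisfies every defining relation of an integrable harmonic Higgs bundle (Definition \ref{integrableHHB}), exploiting that both $\Phi$ and $\mathcal{U}$ vanish, so that the only condition carrying genuine content is the one involving $D'(\mathcal{Q})$.

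First I would check that $(\mathcal{T}_M, h, \Phi=0)$ is a harmonic Higgs bundle. Since $\Phi=0$ we have $\Phi^\dag=0$, and hence every relation among the three lines defining a harmonic Higgs bundle that contains $\Phi$ or $\Phi^\dag$ holds trivially: $\overline{\partial}(\Phi)=0$, $\Phi\wedge\Phi=0$, $D'(\Phi^\dag)=0$, $\Phi^\dag\wedge\Phi^\dag=0$, $D'(\Phi)=0$, $\overline{\partial}(\Phi^\dag)=0$. What remains is $(\overline{\partial})^2=0$, $(D')^2=0$, and $D'\overline{\partial}+\overline{\partial}D'=0$. The first two are automatic because $D'+\overline{\partial}$ is the Chern connection of the Hermitian holomorphic bundle $(\mathcal{T}_M,h)$, whose curvature is of pure type $(1,1)$, forcing its $(2,0)$ and $(0,2)$ components to vanish. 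The third is exactly the standing hypothesis that $D'$ is a holomorphic connection, and it matches the required right-hand side, since $-(\Phi\Phi^\dag+\Phi^\dag\Phi)=0$.

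Next I would treat the supplementary data. With $\mathcal{U}=0$ and $\Phi=0$, the relations \eqref{U}, \eqref{CU}, \eqref{UCQ} hold trivially: $\overline{\partial}(0)=0$, $[\,0,0\,]=0$, and $D'(0)-[\,0,\mathcal{Q}\,]+0=0$. The relation \eqref{QQ}, namely $\mathcal{Q}^\dag=\mathcal{Q}$, is equivalent, in the frame $e_1,\dots,e_m$ with $h(e_i,e_j)=\delta_{ij}$, to the matrix identity $\mathbb{Q}^t=\overline{\mathbb{Q}}$, which is precisely the hypothesis imposed on $\mathbb{Q}$ (the same adjoint bookkeeping as in Lemma \ref{Lemma2}). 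Thus the only relation with real content is \eqref{QCU}, which, because $\Phi=0$, reduces to the single requirement $D'(\mathcal{Q})=0$.

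To establish $D'(\mathcal{Q})=0$ I would compute the induced connection in the given frame. Since $e_1,\dots,e_m$ is $D'$-flat, we have $D'e_i=0$, and writing $\mathcal{Q}e_j=\sum_i\mathbb{Q}_{ij}e_i$ with $\mathbb{Q}$ constant gives $(D'\mathcal{Q})(e_j)=D'(\mathcal{Q}e_j)-\mathcal{Q}(D'e_j)=\sum_i(d'\mathbb{Q}_{ij})\,e_i=0$, because the entries $\mathbb{Q}_{ij}$ are constants. Hence $D'(\mathcal{Q})=0$ and \eqref{QCU} holds, completing the verification. I do not expect a genuine obstacle here: the vanishing of $\Phi$ and $\mathcal{U}$ collapses the whole integrability system to the lone condition $D'(\mathcal{Q})=0$, which the constancy of $\mathbb{Q}$ in a $D'$-flat frame delivers at once. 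The only points requiring care are the existence and compatibility of a frame that is simultaneously holomorphic, $D'$-flat, and $h$-orthonormal (guaranteed as in the proof of Lemma \ref{Lemma2} and assumed in the statement), and the correct adjoint convention making $\mathcal{Q}^\dag=\mathcal{Q}$ correspond to $\overline{\mathbb{Q}}=\mathbb{Q}^t$.
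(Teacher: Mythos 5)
Your verification is correct, and every step checks out: with $\Phi\equiv 0$ and $\mathcal{U}\equiv 0$ the only axioms of Definition \ref{integrableHHB} with content are $\mathcal{Q}^\dag=\mathcal{Q}$ (which in the $h$-orthonormal frame is exactly $\overline{\mathbb{Q}}=\mathbb{Q}^t$) and $D'(\mathcal{Q})=0$ (immediate from the constancy of $\mathbb{Q}$ in a $D'$-flat frame), while the remaining harmonic-bundle relation $D'\overline{\partial}+\overline{\partial}D'=-(\Phi\Phi^\dag+\Phi^\dag\Phi)=0$ is precisely the hypothesis that $D'$ is holomorphic. Your route is, however, noticeably more direct than the paper's. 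The paper's own proof of Corollary \ref{U3} spends most of its length on Claims $1^\circ$--$3^\circ$: showing that eigenvectors of $\mathcal{Q}$ with distinct eigenvalues have vanishing product, that triple products $(X\circ Y)\circ Z$ vanish, and hence that $\circ$ is associative ($\Phi\wedge\Phi=0$), and it separately derives $\overline{\partial}(\Phi^\dag)=0$ from $D'(\Phi)=0$ using positivity of $h$. All of this machinery is aimed at a Higgs field that is merely constrained by $[\Phi,\mathcal{Q}]=\Phi$ rather than set to zero; it appears to be the residue of a stronger intended statement, and under the hypothesis $\Phi_XY=0$ as actually written, every one of those claims is vacuous. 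So your proof buys economy and exactly matches the statement as given, whereas the paper's argument buys nothing extra for this statement but would be the relevant skeleton if one wanted to allow nonzero $\Phi$ compatible with $\mathcal{Q}$. The only points you should keep explicit (and you do) are the adjoint bookkeeping in the orthonormal frame and the fact that $(D')^2=0$ comes for free from the $(1,1)$-type of the Chern curvature.
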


\begin{proof}
{\it  of corollary \ref{U3}.}\\
By lemma\ref{Lemma2}, there is a $D^{'}$-flat holomorphic local frame $e_1,e_2, \cdots,e_m,$ such that
$\mathcal{Q} e_j=\lambda _je_j,\forall j=1,2,...,m$, here $\lambda_j$ are constants.\\

Note that $X\circ Y=-\Phi _XY\,\, ,$ $\because \Phi _XY=\Phi _XY,$
$\therefore X\circ Y=Y\circ X.$

\begin{claim}$1^{\circ}$\label{Tc1}
If $\mathcal{Q} X=\lambda X, \mathcal{Q} Y=\mu Y,  \mu \ne \lambda \,\, $ then $    X\circ Y=0.$
\end{claim}
In fact, $-\left[ \Phi ,\mathcal{Q} \right] +\Phi =0\Longleftrightarrow \left[ -\Phi ,\mathcal{Q} \right] =-\Phi,
$
$$\therefore \,\,\left[ -\Phi _X,\mathcal{Q} \right] \left( Y \right) =\left( -\Phi _X \right) \left( Y \right).
$$
Direct computation shows that
$$\mathcal{Q} \left( -\Phi _XY \right) =\left( \mu -1 \right) \left( -\Phi _XY \right) .$$
\begin{align}\label{t57}
{\rm i.e.} \quad \mathcal{Q} \left( X\circ Y \right) =\left( \mu -1 \right) X\circ Y
\end{align}
Similarly,
$\left[ -\Phi _Y,\mathcal{Q} \right] \left( X \right) =\left( -\Phi _Y \right) \left( X \right) $ implies
\begin{align}\label{t58}
 \mathcal{Q} \left( Y\circ X \right) =\left( \lambda -1 \right) Y\circ X.
\end{align}
Then by (\ref{t57}),(\ref{t58}) and the assumption that $\Phi$ is symmetric, we have
$$X\circ Y=0.$$

\begin{claim}$2^{\circ}$\label{Tc2}  $\forall X, Y,Z\in \Gamma \left( U,\mathcal{T} _{M}^{1,0} \right) $, if $\mathcal{Q} X=\lambda X, \mathcal{Q} Y=\lambda Y, \mathcal{Q} Z=\lambda Z$, then $ X\circ Y\circ Z=0$.
\end{claim}
In fact, by $\left[ -\Phi ,\mathcal{Q} \right] =-\Phi $ we have
$$
\left[ -\Phi _{X\circ Y},\mathcal{Q} \right] \left( Z \right) =-\Phi _{X\circ Y}Z
$$
\begin{align}\label{t61}
\Longleftrightarrow \mathcal{Q} \left( \left( X\circ Y \right) \circ Z \right) =\left( \lambda -1 \right) \left( \left( X\circ Y \right) \circ Z \right)
\end{align}
If $X\circ Y=0$,then $ \left( X\circ Y \right) \circ Z=0.$
If $X\circ Y\ne 0  $, then $$\mathcal{Q} \left( X\circ Y \right) =\left( \lambda -1 \right) \left( X\circ Y \right). $$
$$ \therefore
\left[ -\Phi _Z,\mathcal{Q} \right] \left( X\circ Y \right) =\left( -\Phi _Z \right) \left( X\circ Y \right)
$$
\begin{align}\label{t62}
\Longleftrightarrow \mathcal{Q} \left( Z\circ \left( X\circ Y \right) \right) =\left( \lambda -2 \right) \left( Z\circ \left( X\circ Y \right) \right).
\end{align}
Then by (\ref{t61}),(\ref{t62}) and the assumption that $\Phi$ is symmetric, we have
$$\left( X\circ Y \right) \circ Z=Z\circ \left( X\circ Y \right) =0.$$
By claim $1^{\circ}$ and $2^{\circ}$ we have $\forall X,Y,Z\in \mathcal{T} _{M}^{1,0},$ if $X,Y,Z$
are eigenvectors, then we have
$$\left( X\circ Y \right) \circ Z=0.$$
$$
\therefore \,\,X\circ \left( Y\circ Z \right) =\left( Y\circ Z \right) \circ X=0=\left( X\circ Y \right) \circ Z.
$$
\begin{claim}$3^{\circ}$\label{Tc3} The product $\circ$ satisfies:
$$\forall \xi ,\eta ,\zeta \in \Gamma \left( U,\mathcal{T} _{M}^{1,0} \right) ,   \left( \xi \circ \eta \right) \circ \zeta =0,
$$
therefore This product $\circ$ has the associative law, {\rm i.e.} $\Phi \land \Phi =0.$
\end{claim}
In fact, $\left\{ e_1,e_2,...,e_m \right\} $ is a local frame. By claim $1^{\circ}$ and $2^{\circ}$ ,
$$\forall i,j,k,  \quad
\left( e_i\circ e_j \right) \circ e_k=0.
$$
Let $\xi =f^ie_i, \eta =g^je_j, \zeta =h^ke_k$, then
$$
\left( \xi \circ \eta \right) \circ \zeta =f^ig^jh^k\left( e_i\circ e_j \right) \circ e_k=0.
$$
So we can conclude that $\Phi\wedge \Phi=0$ holds.\\
Since $D^{'}\left( \Phi \right)=0 $ , $D^{'}+\bar{\partial}
$ is compatible with $h$ and $\bar{\partial}\left( \Phi ^{\dagger} \right) =0$ holds,
straight forward computation shows that:
$$
\forall \alpha,\beta, \quad h\left( \left( \bar{\partial}_{\bar{X}_i}\left( \Phi _{\bar{X}_j}^{\dagger} \right) -\bar{\partial}_{\bar{X}_j}\left( \Phi _{\bar{X}_i}^{\dagger} \right) \right) e_{\alpha},e_{\beta} \right) =h\left( e_{\alpha},\left( D_{X_i}^{'}\left( \Phi _{X_j} \right) -D_{X_j}^{'}\left( \Phi _{X_i} \right) \right) e_{\beta} \right),$$
where $X_i=\frac{\partial}{\partial t^i}$, while $t^1,t^2,...,t^m$ are local coordinates.\\
$ D^{'}\left( \Phi \right) =0$  and $h$ is positive definite, $ \therefore \bar{\partial}\left( \Phi ^{\dagger} \right) =0.
$
Since $\mathcal{U} =0,   $ we get $\left[ \Phi ,\mathcal{U} \right] =0,$ and $         \mathcal{U} ^{\dagger}=0.$
By $-\left[ \Phi ,\mathcal{Q} \right] +\Phi =0$,
we get $$D^{'}\left( \mathcal{U} \right) -\left[ \Phi ,\mathcal{Q} \right] +\Phi =0,
$$
$$D^{'}\left( \mathcal{Q} \right) +\left[ \Phi ,\mathcal{U} ^{\dagger} \right] =D^{'}\left( \mathcal{Q} \right) +0=0.
.
$$
$\therefore \left( M, h, \Phi , \mathcal{U} =0, \mathcal{Q} \right) $ is a integrable harmonic Higgs bundle.
\hfill $\qed$
\end{proof}



\bibliographystyle{amsplain}
\bibliography{scoFMwtts}

\providecommand{\bysame}{\leavevmode\hbox to3em{\hrulefill}\thinspace}
\providecommand{\MR}{\relax\ifhmode\unskip\space\fi MR }
\providecommand{\MRhref}[2]{%
  \href{http://www.ams.org/mathscinet-getitem?mr=#1}{#2}
}
\providecommand{\href}[2]{#2}
\begin{thebibliography}{10}

\bibitem{CV3}
Sergio Cecotti and Cumrun Vafa, \emph{Topological--anti-topological fusion},
  Nuclear Phys. B \textbf{367} (1991), no.~2, 359--461. \MR{MR1139739
  (93a:81168)}

\bibitem{CVN}
\bysame, \emph{On classification of {$N=2$} supersymmetric theories}, Comm.
  Math. Phys. \textbf{158} (1993), no.~3, 569--644. \MR{MR1255428 (95g:81198)}

\bibitem{CIR}
Alessandro Chiodo, Hiroshi Iritani, and Yongbin Ruan,
  \emph{Landau-{G}inzburg/{C}alabi-{Y}au correspondence, global mirror symmetry
  and {O}rlov equivalence}, Publ. Math. Inst. Hautes \'{E}tudes Sci.
  \textbf{119} (2014), 127--216. \MR{3210178}

\bibitem{D}
Boris Dubrovin, \emph{Geometry of {$2$}{D} topological field theories},
  Integrable systems and quantum groups (Montecatini Terme, 1993), Lecture
  Notes in Math., vol. 1620, Springer, Berlin, 1996, pp.~120--348.
  \MR{MR1397274 (97d:58038)}

\bibitem{Fan}
H.~Fan, \emph{Schr\"odinger equations, deformation theory and $tt^*$-geometry},
  arXiv:1107.1290v1 (2011).

\bibitem{FLY}
Huijun Fan, Tian Lan, and Zongrui Yang, \emph{L{G}/{CY} correspondence between
  {$tt^*$} geometries}, Commun. Math. Res. \textbf{37} (2021), no.~3, 297--349.
  \MR{4295521}

\bibitem{Hert}
Claus Hertling, \emph{Frobenius manifolds and moduli spaces for singularities},
  Cambridge Tracts in Mathematics, vol. 151, Cambridge University Press,
  Cambridge, 2002. \MR{MR1924259 (2004a:32043)}

\bibitem{Hert2}
\bysame, \emph{{$tt\sp *$} geometry, {F}robenius manifolds, their connections,
  and the construction for singularities}, J. Reine Angew. Math. \textbf{555}
  (2003), 77--161. \MR{MR1956595 (2005f:32049)}

\bibitem{Lin}
Jiezhu Lin, \emph{Some constraints on {F}robenius manifolds with a
  {$tt^*$}-structure}, Math. Z. \textbf{267} (2011), no.~1-2, 81--108.
  \MR{2772242}

\bibitem{Mani}
Yuri~I. Manin, \emph{Frobenius manifolds, quantum cohomology, and moduli
  spaces}, American Mathematical Society Colloquium Publications, vol.~47,
  American Mathematical Society, Providence, RI, 1999. \MR{MR1702284
  (2001g:53156)}

\bibitem{Sabb22}
Claude Sabbah, \emph{{Universal unfoldings of Laurent polynomials and tt*
  structures}}, {From Hodge theory to integrability and TQFT: tt*-geometry}
  (R.~Donagi and K.~Wendland, eds.), Proc. Symposia in Pure Math., vol.~78,
  American Math. Society, Providence, RI, 2008, pp.~1--29.

\bibitem{Sabbah2}
\bysame, \emph{Universal unfoldings of {L}aurent polynomials and {$tt^\ast$}
  structures}, From {H}odge theory to integrability and {TQFT} tt*-geometry,
  Proc. Sympos. Pure Math., vol.~78, Amer. Math. Soc., Providence, RI, 2008,
  pp.~1--29. \MR{2483791}

\bibitem{SK2}
Kyoji Saito, \emph{Primitive forms for a universal unfolding of a function with
  an isolated critical point}, J. Fac. Sci. Univ. Tokyo Sect. IA Math.
  \textbf{28} (1981), no.~3, 775--792 (1982). \MR{656053}

\bibitem{SK3}
\bysame, \emph{Period mapping associated to a primitive form}, Publ. Res. Inst.
  Math. Sci. \textbf{19} (1983), no.~3, 1231--1264. \MR{723468}

\bibitem{SM2}
Morihiko Saito, \emph{On the structure of {B}rieskorn lattice}, Ann. Inst.
  Fourier (Grenoble) \textbf{39} (1989), no.~1, 27--72. \MR{1011977}

\bibitem{taka}
Atsushi Takahashi, \emph{{$tt\sp \ast$} geometry of rank two}, Int. Math. Res.
  Not. (2004), no.~22, 1099--1114. \MR{MR2041650 (2005b:53143)}

\end{thebibliography}
\end{document}